\documentclass[reqno, fleqn, 10pt]{amsart}

\usepackage[english]{babel}

\usepackage{mathtools}
\usepackage{color}
\usepackage{comment}
\usepackage{amsmath}
\usepackage{amssymb}
\usepackage{amsfonts}
\usepackage{amsthm}
\usepackage{mathrsfs}
\usepackage[utf8x]{inputenc}

\usepackage[autostyle]{csquotes}


\newcommand{\N}{\mathbb{N}}

\newcommand{\Z}{\mathbb{Z}}

\newcommand{\f}{\rightarrow}

\newcommand{\bdr}{\partial}
\newcommand{\fix}{\mathsf{Fix}}
\newcommand{\orientable}[1]{\mathord{\mathop{#1}\limits^{\scriptscriptstyle(\sim)}}}

\newcommand{\ortimes}{\orientable{\times}}

\newcommand{\Stab}{\operatorname{Stab}}

\newtheorem{thm}{Theorem}[section]
\newtheorem*{thm*}{Theorem}
\newtheorem*{JSJdecthm}{JSJ-decomposition Theorem}
\newtheorem*{hyperbolization}{Hyperbolization Theorem}

\newtheorem{prop}[thm]{Proposition}
\newtheorem{lem}[thm]{Lemma}

\theoremstyle{remark}

\newtheorem{rmk}[thm]{Remark}

\theoremstyle{definition}
\newtheorem{defn}[thm]{Definition}

\title[On the peripheral subgroups of $3$-manifold groups and splittings]{On the peripheral subgroups of irreducible $3$-manifold groups and acylindrical splittings}
\author[F. Cerocchi]{Filippo Cerocchi}
\email{fcerocchi@mpim-bonn.mpg.de; fcerocchi@gmail.com}
\address{Max-Planck Institut f\"ur Mathematik, Vivatsgasse 7, 53111, Bonn, Germany. \newline}
\thanks{The author wish to thank the Max-Planck-Institut f\"ur Mathematik for the  financial support and the excellent working conditions.\\}
\begin{document}

\maketitle

\vspace{-3mm}
{\bf Abstract.} We discuss the problem of separation under conjugacy and malnormality of the abelian peripheral subgroups of an orientable, irreducible $3$-manifold $X$. We shall focus on the relation between this problem and the existence of acylindrical splittings of $\pi_1(X)$ as an amalgamated product or  HNN-extension along the abelian subgroups corresponding to the JSJ-tori, providing new proofs of results in \cite{wilton2010profinite} and \cite{delaHarpe2014onmalnormal}.

\section{Introduction}



A classical result proved in the late seventies by Jaco-Shalen and independently by Johannson (see \cite{jaco1979seifert}, \cite{johannson1979homotopy}) says that an orientable, irreducible $3$-manifold $X$ with (possibly empty) boundary can be splitted along a minimal collection of embedded, \textit{incompressible}\footnote{We say that a compact surface $S$ properly embedded in a compact $3$-manifold $X$ is incompressible if for any embedded disk $D$ such that $\bdr D\subset S$, there exists an embedded disk $D'\subset S$  such that $\bdr D'=\bdr D$.} tori which are not \textit{boundary parallel}\footnote{A properly embedded compact submanifold $Y$ of a compact manifold $X$ is said to be boundary parallel if it can be isotoped into  $\bdr X$ by an isotopy fixing $\bdr Y\subset \bdr X$.} and which decompose the manifold $X$ into connected components which are either \textit{atoroidal} (\textit{i.e.} any embedded, incompressible torus  is boundary parallel) or \textit{Seifert fibered} (a  well understood class of $3$-manifolds, see subsection \S2.1). Such a collection  is unique up to isotopy, and is called the collection of the \textit{JSJ-tori} of $X$. The decomposition along this collection of tori is called  {\it JSJ-decomposition} and the connected components obtained via the previous procedure are the {\it JSJ-components} of $X$. The set of atoroidal irreducible $3$-manifolds and the set of Seifert fibered $3$-manifolds have a non empty intersection: actually, atoroidal, Seifert fibered $3$-manifolds are classified (see \cite{jaco1979seifert}, \S IV.2.5 and \S IV.2.6).
 By Thurston's Hyperbolization Theorem 
and by the work of Perelman, the interior of an atoroidal, non-Seifert fibered, irreducible $3$-manifold can be endowed with a complete hyperbolic metric which has finite volume if and only if one of the following holds: either the manifold is closed or all of its boundary components are  homeomorphic to tori.
We shall refer to the atoroidal, non-Seifert fibered, irreducible $3$-manifolds as to the \textit{manifolds of hyperbolic type}.
We refer to subsection \S\ref{JSJDEC} for further details and references.\\


We are interested in the study of the \textit{peripheral subgroups} corresponding to the boundary tori of an orientable, irreducible $3$-manifold $X$, \textit{i.e.} the \textit{abelian peripheral subgroups of }$\pi_1(X)$. The \textit{peripheral subgroups} of the fundamental group of a given compact $3$-manifold $X$ are the subgroups of $\pi_1(X)$ which are images via the natural inclusions\footnote{Notice that the \textit{peripheral subgroups} are defined up to the choice of  basepoints $x_0$ in $X$, $x_i$ in $\bdr_i X$ and of a path connecting  $x_0$ to $x_i$.} of the fundamental groups of the connected components of $\bdr X$.
In particular we shall focus on the following question: how do the abelian peripheral subgroups of  $\pi_1(X)$ behave under conjugacy in $\pi_1(X)$?\\

 In  \cite{delaHarpe2014onmalnormal} de la Harpe and Weber proved the following:
 \begin{thm*}[\cite{delaHarpe2014onmalnormal}, Theorem 3]
 	Let $X$ be an orientable, irreducible $3$-manifold $X$ and let $T\subset \bdr X$ be a boundary torus. The peripheral subgroup $\pi_1(T)$ is malnormal\footnote{A subgroup $H<G$ is said to be \textit{malnormal} in $G$ if $g H g^{-1}\cap H=\{1\}$ for any $g\in G\smallsetminus H$.} in $\pi_1(X)$ if and only if  $T$ bounds a JSJ-component of hyperbolic type.
 \end{thm*}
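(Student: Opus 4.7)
My strategy is to run Bass--Serre theory on the graph of groups $\mathcal{G}$ associated to the JSJ-decomposition of $X$: vertex groups are the $\pi_{1}(M_i)$ of the JSJ-components $M_i$, edge groups are the $\pi_{1}(T_j)\cong\Z^{2}$ of the JSJ-tori $T_j$. Let $\mathbb{T}$ be the corresponding Bass--Serre tree. The boundary torus $T\subset\partial X$ lies on the boundary of a unique JSJ-component $M$, and $\pi_{1}(T)\subset\pi_{1}(M)$ fixes the vertex $v_{M}\in\mathbb{T}$ stabilised by $\pi_{1}(M)$.

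For the ``if'' direction, assume $M$ is of hyperbolic type. I would first establish the intrinsic statement that the family of peripheral subgroups of $\pi_{1}(M)$ corresponding to the boundary components of $M$ is almost-malnormal in $\pi_{1}(M)$. By the Thurston--Perelman hyperbolisation recalled in the introduction, $\pi_{1}(M)$ embeds as a finite-covolume discrete subgroup of $\mathrm{PSL}_{2}(\C)$ in which the peripheral subgroups are exactly (representatives of the conjugacy classes of) the maximal parabolic subgroups; since any two distinct maximal parabolic subgroups fix different points of $\partial_{\infty}\mathbb{H}^{3}$, they meet trivially. I would then bootstrap to $\pi_{1}(X)$: if $1\neq h\in\pi_{1}(T)\cap g\pi_{1}(T)g^{-1}$ for some $g\in\pi_{1}(X)$, then $h$ fixes both $v_{M}$ and $g\cdot v_{M}$, hence the whole geodesic between them. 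If this geodesic has length zero, then $g\in\pi_{1}(M)$ and the intrinsic malnormality of $\pi_{1}(T)$ forces $g\in\pi_{1}(T)$. If it has positive length, then $h$ stabilises the first edge, whose stabiliser is a $\pi_{1}(M)$-conjugate $a\pi_{1}(T')a^{-1}$ of $\pi_{1}(T')$ for some JSJ-torus $T'\subset\partial M$; but $T\neq T'$ (one is a boundary torus of $X$, the other is internal), so $h$ lies in the intersection of two distinct maximal parabolic subgroups of $\pi_{1}(M)$, which is trivial, contradicting $h\neq 1$.

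For the ``only if'' direction I would argue contrapositively. Supposing that $M$ is Seifert-fibered, I exhibit a witness to the failure of malnormality of $\pi_{1}(T)$. Choose the Seifert fibration of $M$ so that $T$ is saturated, and let $f\in\pi_{1}(T)\subset\pi_{1}(M)$ be the class of a regular fibre. Then $f$ is central in $\pi_{1}(M)$ when the base orbifold is orientable, and in an index-two subgroup otherwise. It suffices to produce $g\in\pi_{1}(M)\smallsetminus\pi_{1}(T)$ centralising $f$, for then $f\in g\pi_{1}(T)g^{-1}\cap\pi_{1}(T)$ witnesses non-malnormality. The existence of such $g$ follows from the fact that the image of $\pi_{1}(T)$ in the orbifold fundamental group of the base of the Seifert fibration is the cyclic subgroup generated by the boundary loop, and this is a proper subgroup in all but a short list of degenerate configurations.

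\textbf{Main difficulty.} I expect the bulk of the work to sit in the reverse direction, where I must inspect every Seifert-fibered model that can arise as a JSJ-piece with a torus boundary component and verify case by case that the centraliser of the regular fibre strictly contains $\pi_{1}(T)$. Degenerate models such as $T^{2}\times I$, the twisted $I$-bundle over the Klein bottle, and the atoroidal Seifert families classified in \cite{jaco1979seifert} will need to be either excluded globally (for instance the solid torus, ruled out by incompressibility of $T$) or handled by producing the witness $g$ in an adjacent JSJ-component and transporting it back across the JSJ-torus via Bass--Serre theory.
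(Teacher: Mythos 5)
Your proposal is correct and follows essentially the same route as the paper: the Bass--Serre tree of the JSJ-splitting, malnormality and conjugate separation of the peripheral subgroups of a hyperbolic-type piece (the paper's Lemma~\ref{Hyperbolic}, which you derive from maximal parabolics), and the normal fibre subgroup of a Seifert piece as the witness to non-malnormality. The only packaging difference is that the paper adds leaf vertices for the boundary tori (the \enquote{peripheral extension}) so as to prove this theorem and Theorem~\ref{AI_bdrtori} simultaneously via Proposition~\ref{int_leaves_stab}, whereas you work with the bare JSJ tree and can therefore bypass the $K\widetilde{\times}I$ analysis; this changes nothing essential.
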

 
   The malnormality of a suitable family of subgroups may have interesting consequences. As an example, the previous result can be used to characterize those $3$-manifold groups which are CA or CSA\footnote{A CA group is a group $G$ such that for each element $g\in G$ the centralizer $C(g)$ is abelian; a CSA group is a group whose maximal abelian subgroups are malnormal.} (see Corollary 2.5.11 in \cite{aschenbrenner20153}).\\

 Given an orientable, irreducible $3$-manifold $X$, we shall focus instead on the boundary tori of $X$ lying into Seifert fibered JSJ-components of $X$. The next result, together with Theorem 3 in \cite{delaHarpe2014onmalnormal}, gives a picture of the cases that can occur (except for $D^2\times S^1$, $T^2\times I$, $K\widetilde{\times}I$ --- the twisted interval bundle over the Klein bottle --- where the answer is trivial):

\begin{thm}\label{AI_bdrtori}
Let $X$ be an orientable, irreducible $3$-manifold with non-empty boundary and assume that $X$ is not homeomorphic to  $D^2\times S^1$, $T^2\times I$, $K\widetilde\times I$. Let $T_{(k_1,i_1)}^{\bdr}$, $T_{(k_2,i_2)}^{\bdr}$ denote two (possibly equal) boundary tori of $X$ contained into two (possibly equal) Seifert fibered JSJ-components $X_{k_1},\, X_{k_2}$ of $X$. 
\begin{itemize}
	\item[(i)]  Assume that \small$(k_1,i_1)=(k_2,i_2)=(k,i)$\normalsize\, and let \small$g\in\pi_1(X)\smallsetminus\pi_1(T_{(k,i)}^{\bdr})$\normalsize. Then
	\small
	 $$g\pi_1(T_{(k,i)}^{\bdr})g^{-1}\cap\pi_1(T_{(k,i)}^{\bdr})\neq\{1\}$$
	 \normalsize
	  if and only if \small$g\in\pi_1(X_{k})$\normalsize. In  this case\, \small$g\pi_1(T_{(k,i)}^{\bdr})g^{-1}\cap\pi_1(T_{(k,i)}^{\bdr})=\langle f_k\rangle$\normalsize,\, where \small$f_k$\normalsize\, is a regular fiber of \small$\pi_1(X_k)$\normalsize.
	\item[(ii)] Assume that \small$(k_1,i_1)\neq (k_2,i_2)$\normalsize,\, and let \small$g\in\pi_1(X)$\normalsize. Then
	\small
	$$g\pi_1(T_{(k_1,i_1)}^{\bdr})g^{-1}\cap\pi_1(T_{(k_2,i_2)}^{\partial})\neq\{1\}$$
	\normalsize
	if and only if \small$k_1=k_2=k$\normalsize,\, and  \small$g\in\pi_1(X_k)$\normalsize. In this case we have\linebreak \small$g\pi_1(T_{(k, i_1)}^{\bdr})g^{-1}\cap\pi_1(T_{(k, i_2)}^{\bdr})=\langle f_k\rangle$\normalsize,\, where \small$f_k$\normalsize\, is a regular fiber of \small$\pi_1(X_k)$\normalsize.
\end{itemize}
\end{thm}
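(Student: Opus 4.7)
The plan is to combine the graph-of-groups structure coming from the JSJ-decomposition of $X$ with the internal Seifert structure of the components $X_{k_1},\,X_{k_2}$. Let $\mathcal{T}$ be the associated Bass-Serre tree, with vertex $v_k$ stabilised by $\pi_1(X_k)$ and edges stabilised by the JSJ-torus subgroups. Since each boundary torus $T_{(k,i)}^{\bdr}$ sits inside a single JSJ-component $X_k$, one has $\pi_1(T_{(k,i)}^{\bdr})\subset\pi_1(X_k)=\Stab(v_k)$, and this peripheral $\mathbb{Z}^2$ always contains the regular fiber subgroup $\langle f_k\rangle$, which is normal in $\pi_1(X_k)$.

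The ``if'' directions of (i) and (ii) are then immediate: when $g\in\pi_1(X_k)$ and $k_1=k_2=k$, the normality $g\langle f_k\rangle g^{-1}=\langle f_k\rangle$ together with $\langle f_k\rangle\subset\pi_1(T_{(k,i_j)}^{\bdr})$ gives the desired non-trivial intersection. To promote this to the sharp equality $\langle f_k\rangle$, the plan is to quotient $\pi_1(X_k)$ by $\langle f_k\rangle$ and work in the base $2$-orbifold group $\pi_1^{\operatorname{orb}}(B_k)$: under our hypotheses $B_k$ is a hyperbolic $2$-orbifold with non-empty boundary (the excluded pieces $D^2\times S^1$, $T^2\times I$, $K\widetilde\times I$ are exactly the Seifert cases with non-hyperbolic base), and in such $\pi_1^{\operatorname{orb}}(B_k)$ the boundary cyclic subgroups are malnormal and pairwise non-conjugate; lifting gives the claimed equalities.

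For the ``only if'' directions, let $h\neq 1$ lie in $g\pi_1(T_{(k_1,i_1)}^{\bdr})g^{-1}\cap\pi_1(T_{(k_2,i_2)}^{\bdr})$. Then $h$ fixes $v_{k_2}$ and $g\cdot v_{k_1}$ in $\mathcal{T}$, hence the entire geodesic arc between them. If this arc is trivial, $g\cdot v_{k_1}=v_{k_2}$ forces $k_1=k_2=k$ and $g\in\pi_1(X_k)$, which is exactly the conclusion. If the arc is non-trivial, take its first edge $e$ at $v_{k_2}$, with stabiliser $\pi_1(T_e^{JSJ})$: since $T_{(k_2,i_2)}^{\bdr}$ is a boundary torus of $X$ rather than a JSJ-torus, $T_{(k_2,i_2)}^{\bdr}\ne T_e^{JSJ}$, and by the orbifold malnormality from the previous paragraph the intersection of these two peripheral $\mathbb{Z}^2$-subgroups inside $\pi_1(X_{k_2})$ is $\langle f_{k_2}\rangle$; thus $h\in\langle f_{k_2}\rangle$. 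The symmetric analysis at $g\cdot v_{k_1}$ gives $g^{-1}hg\in\langle f_{k_1}\rangle$, so that $f_{k_1}$ and $f_{k_2}$ are conjugate in $\pi_1(X)$.

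The main obstacle is then ruling out this final conjugacy. The required statement is that for every Seifert JSJ-component $X_k$ one has $N_{\pi_1(X)}(\langle f_k\rangle)=\pi_1(X_k)$, equivalently that the regular fiber of a Seifert piece is not conjugate in $\pi_1(X)$ into the peripheral $\mathbb{Z}^2$ of any adjacent JSJ-component. This is a classical consequence of the maximality of the JSJ-decomposition: otherwise the two Seifert fibrations would match across the intermediate JSJ-torus, contradicting the minimality of the collection of JSJ-tori. Applied here, it forces $k_1=k_2=k$ and $g\in\pi_1(X_k)$, contradicting the assumed non-triviality of the arc; so only the degenerate-arc case survives, and both parts of the theorem follow.
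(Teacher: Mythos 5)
Your overall architecture --- act on the Bass-Serre tree of the JSJ-splitting, use that the fixed-point set of a nontrivial $h$ is a subtree containing the arc between the two fixed vertices, and identify the edge stabilizers at the endpoints with conjugates of $\langle f_{k_j}\rangle$ via the malnormality of the boundary subgroups in the hyperbolic base orbifold group --- is essentially the paper's (Proposition \ref{int_leaves_stab}, built on Lemmas \ref{peripheral_orb}, \ref{Seifert} and \ref{Hyperbolic}); working with the plain JSJ tree instead of the paper's peripheral extension, where the boundary tori become leaves, is only a cosmetic difference. The \lq\lq if" direction and the identification of the intersection with $\langle f_k\rangle$ are correct.

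The gap is in your last paragraph. Having reduced to $h\in\langle f_{k_2}\rangle$ and $g^{-1}hg\in\langle f_{k_1}\rangle$, what you need is that no nontrivial power of $f_{k_1}$ is conjugate to a power of $f_{k_2}$ by an element whose associated arc in the tree is nontrivial; this is a statement about the \emph{commensurator} of the fiber subgroup, not its normalizer, and it is not a one-line consequence of the minimality of the JSJ collection. Minimality, via Lemma \ref{Gluingiso}, only handles an arc of length one: the fibers of two Seifert pieces do not match across the connecting JSJ-torus. For a longer arc you must propagate $h$ through the intermediate vertices: hyperbolic intermediate vertices are excluded by Lemma \ref{Hyperbolic}, and Seifert intermediate vertices by Lemma \ref{Seifert} combined with Lemma \ref{Gluingiso}, but an intermediate vertex of type $K\widetilde{\times}I$ defeats this propagation, because there the peripheral $\mathbb{Z}^2$ is \emph{normal} of index two (Lemma \ref{kappatwistato}), so an element arriving in that peripheral subgroup fixes both incident edges with no constraint forcing it into a fiber subgroup. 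Ruling this configuration out requires the separate computation comparing $a^{-1}\psi(f_{k_1})a$ with $\psi(f_{k_1})$ inside $\pi_1(K\widetilde{\times}I)$, which is the bulk of the paper's Lemma \ref{M=2} (together with Lemma \ref{kappatwistato}(iii) and Lemma \ref{kappaT}). As written, your appeal to a \lq\lq classical consequence of maximality" silently assumes the adjacent case. A smaller inaccuracy: the regular fiber of $X_{k_1}$ \emph{is} conjugate into the peripheral $\mathbb{Z}^2$ of the adjacent piece along the JSJ-torus; what fails is that it is conjugate into the adjacent \emph{fiber} subgroup.
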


\noindent By a \textit{regular fiber} of $\pi_1(X_k)$ we mean an element representing the homotopy class of the regular fiber of a Seifert fibration on $X_k$. It turns out (see Remark \ref{regular_fiber}) that, in restriction to the cases which can appear in Theorem \ref{AI_bdrtori}, the elements representing the homotopy classes of the regular fibers of the different Seifert fibrations of $X_k$ coincide (up to take inverses). This provides a univoquely determined \textit{regular fiber of $\pi_1(X_k)$}, which depends only on $X_k$.

\begin{rmk}
	Theorem \ref{AI_bdrtori} and Theorem 3 in \cite{delaHarpe2014onmalnormal} will appear as consequences of Proposition \ref{int_leaves_stab} in this paper, where we study the intersection of the leaves stabilizers of the Bass-Serre tree of a suitable graph of groups associated to the manifold $X$ (see section \S3).
\end{rmk}

\begin{rmk}\label{basepoints}
	Notice that the $3$-manifolds excluded by the previous statements are precisely the only three orientable,  Seifert fibered $3$-manifolds with non-empty boundary having virtually abelian fundamental group.
\end{rmk}

We  shall use Theorem \ref{AI_bdrtori}  to give a proof of the existence of a particular class of splittings of the fundamental group of an orientable, irreducible $3$-manifold $X$. In order to state the result we shall briefly recall some facts about $k$-step malnormal amalgamated products, $k$-step malnormal HNN-extensions and $k$-acylindrical splittings. \\

\noindent In 1971 Karrass-Solitar (see \cite{karras1971malnormal}) introduced the notion of \textit{$k$-step malnormal amalgamated product}. Roughly speaking, a $k$-step malnormal amalgamated product $A\,^*_CB$ is an amalgamated product where the amalgam $C$ is malnormal with respect to the elements having \textit{syllable length} greater or equal to $k+1$. In analogy we can define a \textit{$k$-step malnormal HNN-extension} as a HNN-extension $A^*_{\varphi}$, with respect to an isomorphism of subgroups $\varphi :C_{-1}\f C_1$, such that the associated subgroups,  $C_{\pm 1}$, are malnormal and conjugately separated in $A\,_\varphi^{*}$ with respect to the set of elements having a reduced form containing at least $k+1$ times the stable letter $t$ to the power $\pm 1$ (see subsection \S4.1.1 for a precise definition).\\ These notions have a natural generalization in the context of Bass-Serre theory:  the notion of $k$-acylindrical splitting introduced by Sela in \cite{sela1997acylindrical}. We recall that a finitely generated group $G$ admits a \textit{$k$-acylindrical splitting} if there exists a graph of groups $(\mathscr G,\Gamma)$ such that $G=\pi_1(\mathscr G, \Gamma)$ and the action of $G$ on the corresponding Bass-Serre tree $\mathcal T_{(\mathscr G, \Gamma)}$ is $k$-acylindrical, \textit{i.e.} the diameter of the fixed points set of any element of $G$  is smaller or equal to $k$. It turns out  that a splitting of $G$ as a $k$-step malnormal amalgamated product gives a $(k+1)$-acylindrical splitting and a splitting of $G$ as a $k$-step malnormal HNN-extension is a $k$-acylindrical splitting. Conversely, a $k$-acylindrical splitting whose underlying graph has only one edge with two distinct vertices corresponds to a splitting as a $(k-1)$-step malnormal amalgamated product and a $k$-acylindrical splitting having a loop as underlying graph corresponds to a $k$-step malnormal HNN-extension. We shall explain this correspondence in subsection \S4.1.2.

 The existence of acylindrical splittings for the fundamental groups of closed, orientable, irreducible $3$-manifolds with non-trivial JSJ-decomposition, which are not $Sol$-manifolds, has been proved by Wilton-Zalesskii (see \cite{wilton2010profinite}, section \S2). 
 The main ingredients of their proof are explained in full detail in section \S2, and their approach inspired the proof of Proposition \ref{int_leaves_stab}.
We shall work with compact $3$-manifolds with boundary, but no claim of originality is made as Wilton-Zalesskii's  argument extends to this case without effort.
 Given an orientable, irreducible $3$-manifold $X$ we shall look to \lq\lq more elementary" splittings of $\pi_1(X)$ than the one considered in \cite{wilton2010profinite}. Namely, given the collection $\{T_i\}$ of the JSJ-tori of $X$, for each $i$ we shall study the splitting of $\pi_1(X)$ along the subgroup $\pi_1(T_i)$ as an amalgamated product or HNN-extension, rather than the splitting provided by  the whole collection. 
This gives acylindrical splittings of $\pi_1(X)$ as an amalgamated product or HNN-extension, whose constant $k$ is often smaller than the one of the JSJ-splittinng.
 The interest in finding acylindrical splittings comes for instance from Riemannian geometry and geometric group theory: we refer to \cite{cerocchi2017finiteness} and \cite{cerocchi2017entANDfin} where acylindrical splittings are exploited to prove some finiteness results for various classes of Riemannian manifolds and groups, and to \cite{cerocchi2017rigidity}  where a systolic estimate and some topogical rigidity results are proven for non-geometric $3$-manifolds.
 As we shall consider splittings of  $\pi_1(X)$ as an amalgamated product or HNN-extension, we prefer to speak of \textit{malnormal splittings}, since this does not require any reference to Bass-Serre theory.

\begin{prop}[Malnormal splittings]\label{splittings}
	Let $X$ be an orientable, irreducible $3$-manifold having a non-trivial JSJ-decomposition and which is not finitely covered by a torus bundle over the circle. Let $T$ be a JSJ-torus of $X$. We denote by $T^{\pm 1}$ the  boundary tori of $\overline{X\smallsetminus T}$  obtained by cutting $X$ along $T$, and by $X_{k_{\pm1}}$  the JSJ-components of $X$ adjacent to $T$. Let $\varphi:\pi_1(T^{-1})\f \pi_1(T^{+1})$ be the isomorphism given by the gluing.
	\begin{itemize}
		\item[(D1)] Assume that $T$ separates $X$. Let  $X_{k_{\pm1}}$ be hyperbolic JSJ-components. Then $\pi_1(X)$ splits as a $0$-step malnormal  product.
		\item[(D2)] Assume that $T$ separates $X$.  Let $X_{k_{-1}}$ be a hyperbolic JSJ-component and $X_{k_{+1}}$ be a Seifert fibered JSJ-component. Then $\pi_1(X)$ splits as a $1$-step malnormal  product.
		\item[(D3)] Assume that $T$ separates $X$. Let $X_{k_{\pm1}}$ be  Seifert fibered JSJ-components not homeomorphic to $K\widetilde{\times} I$. Then $\pi_1(X)$ splits as a $1$-step malnormal  product.
		\item[(D4)] Assume that $T$ separates $X$.  Let $X_{k_{-1}}$ be a Seifert fibered JSJ-component with hyperbolic base orbifold and $X_{k_{+1}}\simeq K\widetilde\times I$.  Then $\pi_1(X)$ splits as $3$-step malnormal product.
		\item[(ND1)] Assume that $T$ does not separate $X$. Let  $X_{k_{\pm1}}$ be two (possibly equal) hyperbolic  JSJ-components. Then $\pi_1(X)$ splits as a $1$-step malnormal HNN-extension.
		\item[(ND2)] Assume that $T$ does not separate $X$. If we are not in case {\rm (ND1)} then $\pi_1(X)$ splits as a $2$-step malnormal HNN-extension.
		\end{itemize}
\end{prop}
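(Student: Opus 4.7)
The plan is, for each case, to analyse the one-edge splitting of $\pi_1(X)$ associated with the single JSJ-torus $T$: an amalgamated product $\pi_1(Y_-)\ast_{\pi_1(T)}\pi_1(Y_+)$ when $T$ separates (with $Y_\pm$ the two connected components of $\overline{X\smallsetminus T}$), or an HNN-extension $\pi_1(\overline{X\smallsetminus T})\ast_\varphi$ otherwise. In both situations the $k$-step malnormality condition reduces to the vanishing of $g\pi_1(T)g^{-1}\cap\pi_1(T)$ (or of the four cross-intersections $gC_\epsilon g^{-1}\cap C_\delta$, $\epsilon,\delta\in\{\pm 1\}$, in the HNN case) as soon as the syllable length, respectively the number of stable letters, in the reduced expression of $g$ exceeds $k$. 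The argument then consists in iteratively \emph{peeling} the syllables of $g$, using Theorem \ref{AI_bdrtori}, the theorem of de la Harpe--Weber, and more generally Proposition \ref{int_leaves_stab}, to control the intersection at each step.

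In the separating cases, write $g=g_1g_2\cdots g_n$ in reduced form and assume $gtg^{-1}\in\pi_1(T)$ for some $t\neq 1$. A standard Bass--Serre normal-form argument forces $t_i:=g_{n-i+1}t_{i-1}g_{n-i+1}^{-1}\in\pi_1(T)$ for $i=1,\ldots,n$ (with $t_0=t$), and the JSJ-component of $Y_\pm$ hosting $g_{n-i+1}$ prescribes the admissible $t_i$: a hyperbolic component immediately forces $t_{i-1}=1$ by de la Harpe--Weber's malnormality; a Seifert component not homeomorphic to $K\widetilde{\times}I$ confines $t_i$ to $\langle f_{k_{\pm 1}}\rangle$ via Theorem \ref{AI_bdrtori}(i); while $K\widetilde{\times}I$ does not restrict $t_i\in\pi_1(T)$ but makes conjugation by an element outside the peripheral subgroup act on $\pi_1(T)\simeq\Z^2$ as a non-trivial involution $\phi$. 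Case (D1) terminates after one iteration and case (D2) after two. In (D3), two iterations impose the constraints $t_1\in\langle f_{k_{-1}}\rangle\cap\langle f_{k_{+1}}\rangle$, which is trivial because the incompatibility of the Seifert slopes at a JSJ-torus yields $\langle f_{k_{-1}}\rangle\neq\langle f_{k_{+1}}\rangle$. In (D4), a fourth iteration is needed: the involution $\phi$ coming from the $K\widetilde{\times}I$-step replaces one fiber line by $\phi(\langle f_{k_{-1}}\rangle)$, distinct from $\langle f_{k_{-1}}\rangle$ precisely because $\langle f_{k_{-1}}\rangle$ is not among the two fibers of $K\widetilde{\times}I$ (again by the JSJ-property), so that the two cyclic subgroups meet trivially.

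The non-separating cases follow the same scheme applied to the HNN-extension, with the twist that the stable letter $t$ shuttles $C_{-1}$ to $C_{+1}$ so that every other step of the peeling swaps the two associated subgroups, and one uses Proposition \ref{int_leaves_stab} in its full cross form $aC_\epsilon a^{-1}\cap C_\delta$ with $\epsilon\neq\delta$. In (ND1), both $C_{\pm 1}$ bound hyperbolic JSJ-components of $\overline{X\smallsetminus T}$ and are therefore malnormal and mutually conjugately separated in $\pi_1(\overline{X\smallsetminus T})$, so two stable letters suffice to reach a contradiction. Note that the identity $tC_{-1}t^{-1}=C_{+1}$ automatically produces a non-trivial cross-intersection, so no HNN-extension can be $0$-step malnormal. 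In (ND2), at least one of $X_{k_{\pm 1}}$ is Seifert and a fiber subgroup may survive a single passage; after three stable letters, either the incompatibility of Seifert slopes at $T$ or the malnormality on the hyperbolic side (when present) forces $t=1$.

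The main technical obstacle is case (D4): one has to check that no cyclic subgroup of $\pi_1(T)\simeq\Z^2$ is simultaneously contained in the Seifert-fiber line $\langle f_{k_{-1}}\rangle$ and preserved by the $\Z/2$-action of $K\widetilde{\times}I$. This is exactly where the JSJ hypothesis on the incompatibility of Seifert fibrations of adjacent JSJ-components along $T$ is used, and it is what makes the malnormality constant jump from $1$ to $3$ in the presence of a $K\widetilde{\times}I$-component. A subsidiary subtle point, shared by the cases involving $K\widetilde{\times}I$ or non-separating tori, is ensuring that the peeling argument is genuinely combinatorial -- i.e.\ that every reduced word translates the base edge of the one-edge Bass--Serre tree by a distance equal to its syllable length -- which is guaranteed by the hypothesis that $X$ is not finitely covered by a torus bundle over the circle, ruling out the degenerate $Sol$-type configurations where this correspondence could break down.
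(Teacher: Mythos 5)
Your proposal is correct and follows essentially the same route as the paper's proof: the one-edge splitting along $T$, reduced/normal forms, and syllable-by-syllable peeling controlled by Theorem \ref{AI_bdrtori}, de la Harpe--Weber malnormality on hyperbolic sides, Lemma \ref{Gluingiso} for the incompatibility of regular fibers across $T$, and in case (D4) the index-two normal peripheral subgroup of $\pi_1(K\widetilde{\times}I)$ together with the fact that conjugation by $a$ preserves no cyclic subgroup of $\pi_1(T)$ other than the two fiber lines (Lemma \ref{kappatwistato}). The one inaccuracy is in your closing paragraph: the equality between reduced length and translation distance in the one-edge Bass--Serre tree is the unconditional normal form theorem and does not require any hypothesis on $X$; the assumption that $X$ is not finitely covered by a torus bundle over the circle is used only to guarantee that the six listed configurations are exhaustive, i.e.\ to exclude the $Sol$-type gluings (for instance both sides homeomorphic to $K\widetilde{\times}I$) for which no acylindricity constant exists at all.
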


\begin{rmk}
	The assumption \lq\lq not finitely covered by a torus bundle" is necessary to avoid the case of $Sol$-manifolds, which do not admit this kind of splittings (see \cite{wilton2010profinite}, \S2). Actually, as a byproduct of Theorem 3.2 and of Example 5.1 in \cite{cerocchi2017rigidity}, $Sol$-manifolds do not admit any kind of acylindrical splitting.
\end{rmk}


\begin{rmk}
	Proposition \ref{splittings} is  to compare with Lemma 2.2 and Lemma 2.4 in \cite{wilton2010profinite}. Notice that it can be obtained from Wilton-Zalesskii's result by collapsing subgraphs of the JSJ-splitting as explained in \cite{kapovich2001combination}, Proposition 3.6. We shall provide a proof which does not make use of Bass-Serre theory.  It is worth to say here that analogous results have been proved for some classes of  \textit{high dimensional graph manifolds} in \cite{frigerio2015rigidity}.
    \end{rmk}

In section \S2 we introduce some background material. In particular, we shall survey topological results concerning Seifert fibered manifold, focusing on the properties of their fundamental groups.

Section \S3 is devoted to the proof of  Theorem \ref{AI_bdrtori}. We shall give a simultaneous proof of Theorem \ref{AI_bdrtori} and of Theorem 3 in \cite{delaHarpe2014onmalnormal} by looking at the Bass-Serre tree associated to a particular extension of the JSJ-splitting of the fundamental group (see Proposition \ref{int_leaves_stab}).  Key ingredients in the proof are three lemmata: Lemma \ref{Seifert} and Lemma \ref{Hyperbolic} which describes the algebraic poperties of the abelian peripheral subgroups when $X$ has an empty JSJ-decomposition, and Lemma \ref{Gluingiso} which provides a necessary and sufficient condition for a map between two boundary tori of  Seifert fibered manifolds to give rise to a graph manifold.

In section \S4 we first recall the aforementioned relation between $k$-step malnormal amalgamated product or HNN-extension and the notion of $k$-acylindrical splitting, and then we  prove Proposition \ref{splittings}.  \\

\noindent{\bf Aknowledgements.} The author wish to thank Erika Pieroni for reading the draft of this paper and for useful comments and discussions.

\section{Basic facts about $3$-manifolds and $3$-manifold groups}
 
This section is devoted to survey some classical facts of $3$-manifold geometry and topology regarding Seifert fibered manifolds, atoroidal manifolds and the JSJ-decomposition. We are mainly interested in their consequences on the algebraic structure of $3$-manifold groups. All the $3$-manifolds considered in this section and in the rest of the paper are assumed to be orientable.
 General references for the results stated in this section are the following: \cite{scott1983geometries}, \cite{fomenko1997algorithmic}, \cite{thurston1997three}, \cite{hatcher2000notes},  \cite{bonahon2002geometric},  \cite{aschenbrenner20153} (Chapters 1 and 2), \cite{martelli2016geometric}.
 
 \subsection{Seifert fibered manifolds} We shall briefly review some topological result concerning Seifert fibered $3$-manifolds. 
 
 

\subsubsection{Topological construction of Seifert fibrations and their $\pi_1$.} We recall that any Seifert fibration on a $3$-manifold  can be constructed as follows. Let  $\Sigma_{g, h}$ be a surface of genus $g\in\Z$, having $h$ boundary components. Excise from $\Sigma_{g,h}$  a set of  $k\ge0$ disks $\mathcal D=\{D_1,.., D_k\}$. This gives rise to $k$ new boundary components, represented by the loops $c_1$,.., $c_k$. Let us consider $(\Sigma_{g,h}\smallsetminus\mathcal{ D})\ortimes S^1$ the (unique) orientable $S^1$-bundle over $\Sigma_{g, h}\smallsetminus\mathcal D$ and let $f$ denote the homotopy class of the fiber of the $S^1$-bundle. Glue $k$ copies of the solid torus $D^2\times S^1$ to the boundary tori of  $(\Sigma_{g,h}\smallsetminus\mathcal{ D})\ortimes S^1$ which project on the loops $c_i$ in the following way: choose a slope $p_i c_i+q_i f$, with $(p_i,q_i)$ a pair of coprime integers with $p_i>0$, and send it in the meridian loop of the $i$th solid torus $D^2\times S^1$. This procedure is known as \textit{Dehn filling} along the curve $p_i c_i+q_i f$.
 We shall denote $S(g,h; (p_1,q_1),..,(p_k,q_k))$  the Seifert fibration obtained via the previous construction.
\vspace{2mm} 
 

 Let $X$ be a Seifert fibered manifold and let  $S=S(g,h; (p_1,q_1),...,(p_k,q_k))$ be a Seifert fibration on $X$. There is a presentation $\mathscr P(S)$ of $\pi_1(X)$ canonically associated to $S$  which can be obtained from the topological construction using Van-Kampen's Theorem:
\vspace{1mm}

\noindent (i) $g\ge 0$
\small
$$\mathscr P(S)=\left\langle a_1,b_1,.., b_g, c_1,..,c_k, d_1,..,d_h, f\,\left |\,\begin{array}{l}
\prod_{i=1}^{g}[a_i,b_i]\cdot\prod_{j=1}^{k} c_j\cdot\prod_{\ell=1}^{h} d_\ell=1\\ 
c_j^{p_j}f^{q_j}=1\\ 
f\leftrightarrow a_i,b_i,c_j, d_\ell
\end{array}
\right.\right\rangle$$
\normalsize

\noindent (ii) $g<0$
\small
$$\mathscr P(S)=\left\langle a_1,\dots a_{|g|}, c_1,\dots,c_k, d_1,\dots,d_h, f\;\left|\;\begin{array}{l}
\prod_{i=1}^{|g|}a_i^2\cdot\prod_{j=1}^{k} c_j\cdot\prod_{\ell=1}^{h} d_\ell=1\\
c_j^{p_j}f^{q_j}=1,\\
a_i\,f\,a_i^{-1}=f^{-1}, f\leftrightarrow c_j, d_\ell
\end{array}
\right.\right\rangle\hspace{0.8cm}$$
\normalsize


\subsubsection{Classification of Seifert fibrations up to (oriented) isomorphisms.} It is natural to ask when two Seifert fibrations $S$ and $S'$ are oriented isomorphic, \textit{i.e.} when there exists an orientation preserving homeomorphism which sends fibers into fibers.\\ It turns out that given  $S=S(g,h; (p_1, q_1),...,(p_k,q_k))$  the following moves do not change its oriented isomorphism type (see \cite{jankins1983lectures} Theorem 1.5 or \cite{martelli2016geometric} Proposition 10.3.11):
 
\begin{itemize}
	\item[(1)] $\{(p_i,q_i),\,(p_{j}, q_{j})\}\mapsto\{(p_i, q_i-p_i), (p_{j},q_{j}+p_{j})\}$\,;
	\item[(2)] $\{(p_1,q_1),...,(p_k,q_k)\}\leftrightarrow\{(p_1,q_1),..., (p_k,q_k), (1,0)\}$\,;
	\item[(3)] $(p_i,q_i)\mapsto (p_i, q_i\pm p_i)$, if $h>0$\,;
	\item[(4)] permutations of the indices of the collection of pairs $\{(p_i,q_i)\}$\,.
\end{itemize}

Moreover, two Seifert fibrations are oriented isomorphic if and only if they are related via a finite sequence of these moves (see \cite{martelli2016geometric}, Proposition 10.3.11). 

\begin{rmk}
    The Seifert fibrations 
    \small
  $$S(g,h; (p_1,q_1),..,(p_k,q_k)),\mbox{ }\,S(g,h; (p_1,-q_1),.., (p_k,-q_k))$$
  \normalsize
     are isomorphic via the homeomorphism which sends fibers into fibers with the opposite orientation, but they are not in general oriented isomorphic.
\end{rmk}

These moves allow to classify Seifert fibered manifolds up to (oriented) isomorphism, but before to give the precise statement we need to introduce a useful invariant: the \textit{Euler number} of a Seifert fibration.
 It is a well known fact that the oriented $S^1$-bundles over a closed surface $\Sigma_g$ can be realized as Seifert fibrations $\{S(g, 0; (1,q))\}_{q\in\Z}$ (see for instance \cite{montesinos1987classical}, Chapter 1). A classical result of algebraic topology says that oriented $S^1$-bundles over closed surfaces are classified by their Euler number, \textit{i.e.} the integral over the base surface of a suitable characteristic class, the Euler class. The Euler number can be seen as an obstruction for the $S^1$-bundle to admit a global section. Given the $S^1$-bundle $S(g,0; (1,q))$, we know that, by construction,  it has a section $s$ in $\Sigma_g\smallsetminus\{x\}$ where $x$ is the projection of the central fiber of the fibered solid torus. It turns out that the Euler number coincides (\cite{bott1982differentialforms}, Theorem 11.16) with the local degree of $s$ at $x$, \textit{i.e} with $q$.
Given a general Seifert fibration $S=S(g,h; (p_1,q_1),..,(p_k,q_k))$, we define its Euler number as  $e(S)=\sum_1^k\frac{q_i}{p_i}$, where the previous sum is defined only mod $\Z$ when $h\neq 0$. Notice that, by definition, the Euler number (possibly mod $\Z$) is invariant under the moves (1)-(4). This definition of the Euler number coincides for $S^1$-bundles over closed surfaces with the classical one.
The following proposition holds (see \cite{jankins1983lectures} Theorem 1.5 or  \cite{martelli2016geometric}, Corollary 10.3.13):

\begin{prop}[Classification of Seifert fibration up to isomorphism]\label{classuptoiso}
	Consider $S=S(g,h;(p_1,q_1),..,(p_k,q_k))$ and $S'=S(g', h'; (p_1',q_1'),..,(p_{k'}',q_{k'}'))$, two Seifert fibrations with $p_i, p_i'\ge 2$. They are (orientation preservingly) isomorphic if and only if $g=g'$, $h=h'$, $k=k'$, $e(S)=e(S')$, and, up to reordering the indices, $p_i=p_i'$, $q_i=q_i' \;\mathrm{mod}\, p_i$.
	\end{prop}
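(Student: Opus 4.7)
The plan is to treat the two implications separately, based on the preceding characterization of oriented isomorphism as equivalence under the moves (1)--(4). The proposition then amounts to checking that the listed data form a complete system of invariants for the orbits of this action on the set of symbols $S(g,h;(p_1,q_1),\ldots,(p_k,q_k))$ with all $p_i\ge 2$.

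For the \emph{only if} direction, I would verify the invariance of each ingredient under every move. The integers $g$ and $h$ are untouched because no move modifies the base surface. The integer $k$ is preserved by moves (1), (3), (4), and move (2) only inserts or removes a pair $(1,0)$, which is excluded from the count by the standing hypothesis $p_i\ge 2$. The multiset $\{(p_i,q_i\bmod p_i)\}$ is unchanged by (1), (3) (which shifts $q_i$ by a multiple of $p_i$), (4) (a permutation), and (2). Finally $e(S)=\sum q_i/p_i$ is preserved exactly by (1), (2), (4); move (3) changes it by an integer, but is legal only when $h>0$, precisely the case in which $e(S)$ is considered modulo $\Z$.

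For the \emph{if} direction, after applying move (4) to match the indices so that $p_i=p_i'$ and $q_i\equiv q_i'\pmod{p_i}$, write $q_i-q_i'=p_i n_i$ with $n_i\in\Z$. When $h>0$ one applies move (3) $|n_i|$ times (with the correct sign) to the $i$-th pair, independently for each $i$, producing $S'$. When $h=0$, the equality $e(S)=e(S')$ reads $\sum_i n_i=0$; since move (1) applied to indices $(i,j)$ has exactly the effect $n_i\mapsto n_i-1$, $n_j\mapsto n_j+1$, and the integer vectors with zero sum are generated by the differences $e_i-e_j$, finitely many applications of move (1) kill all the $n_i$ simultaneously.

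The only delicate point is the closed case $h=0$, where the equality of Euler numbers is indispensable: without it the vector $(n_1,\ldots,n_k)$ would have nonzero total and could not be reached from the zero vector using move (1) alone, since move (3) is unavailable. The degenerate subcases $k\le 1$ require no special attention, because $\sum n_i=0$ then forces every $n_i$ to vanish outright.
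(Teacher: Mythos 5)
Your argument is correct and is essentially the proof that the paper delegates to its references (\cite{jankins1983lectures}, Theorem 1.5; \cite{martelli2016geometric}, Corollary 10.3.13): the paper itself only records the moves (1)--(4), the move-equivalence criterion, and the invariance of $e(S)$, and your write-up is the natural completion of exactly that setup. The one point worth a word of care is that a sequence of moves may pass through symbols containing pairs $(1,q)$, so the invariants should be read as $g$, $h$, the multiset $\{(p_i,\,q_i\bmod p_i):p_i\ge 2\}$ and $e(S)$ on this larger class of symbols --- with that convention every step you describe, including the zero-sum lattice argument via move (1) in the closed case, goes through.
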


\begin{rmk} Let  $S'=S(g,h;(p_1',q_1'),..,(p_{k'}',q_{k'}'))$ with $h>0$ be a Seifert fibration. A consequence of Proposition \ref{classuptoiso} is that either $S'$ is isomorphic to  $S=S(g,h; (p_1,q_1),...,(p_k,q_k))$ with $0<q_i<p_i$ or $S'$ is isomorphic to $S=S(g,h;\,)$, and the two possibilities are mutually exclusive. 
	These Seifert fibrations are canonical in their isomorphism class. In fact, they are  the unique enjoying these properties.
	\end{rmk}

\subsubsection{Classification of Seifert manifolds up to diffeomorphism.} When does a Seifert manifold $X$ admit non-isomorphic Seifert fibrations? The next result (\cite{jaco1980lectures} Theorems VI.17 and VI.18) answer the question: 

\begin{prop}[Classification of Seifert fibered manifolds up to diffeomorphism]\label{classupdiffeo}
	Every Seifert fibered manifold with non-empty boundary admits a unique fibration up to isomorphism, except in the following cases:
	\begin{itemize}
		\item $D^2\times S^1$, which admits the fibrations $S=S(0,1; (p,q))$ for any pair of coprime integers $(p,q)$;
		\item $K\widetilde{\times}I$, which admits two non-isomorphic Seifert fibrations:\\ $S_1=S(-1,1;\,)$, $S_2=S(1,1; (2,1), (2,1))$.
	\end{itemize}
	Every closed Seifert fibered manifold $X$ which is not covered by $S^3$ or $S^2\times S^1$ admits a single Seifert fibration up to isomorphism, except for $K\widetilde\times S^1$ which admits the non-isomorphic fibrations   $S_3=S(0,0; (2,1), (2,1), (2,-1), (2,-1))$ and  $S_4=S(-2,0;\,)$.
\end{prop}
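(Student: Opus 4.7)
The plan is to combine the isomorphism classification of Seifert fibrations (Proposition \ref{classuptoiso}) with an algebraic detection of the regular fiber inside $\pi_1(X)$. Since the Seifert invariants $(g,h;(p_i,q_i))$ can be recovered from the pair $\bigl(\pi_1(X),\langle f\rangle\bigr)$ — the quotient $\pi_1(X)/\langle\!\langle f\rangle\!\rangle$ yields $\pi_1^{\mathrm{orb}}(\mathcal O)$ and hence the base orbifold, while the relations $c_i^{p_i}f^{q_i}=1$ encode the exceptional fibers — it suffices to show that the cyclic subgroup $\langle f\rangle$ is intrinsic to $X$ outside of the listed exceptions.

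First I would treat the generic case. When the base orbifold $\mathcal O$ is hyperbolic in the orbifold sense, $\pi_1^{\mathrm{orb}}(\mathcal O)$ is a Fuchsian group of negative (orbifold) Euler characteristic, and in particular contains no non-trivial cyclic normal subgroup. Hence any cyclic normal subgroup of $\pi_1(X)$ projects to the trivial subgroup of $\pi_1^{\mathrm{orb}}(\mathcal O)$ and therefore lies in $\langle f\rangle$; it follows that $\langle f\rangle$ is the unique maximal cyclic normal subgroup of $\pi_1(X)$, determined intrinsically by $X$, and the uniqueness of the fibration follows from Proposition \ref{classuptoiso}.

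The remaining step is a direct inspection of the $2$-orbifolds $\mathcal O$ whose orbifold fundamental group admits a non-trivial cyclic normal subgroup, namely the spherical, Euclidean and virtually cyclic orbifolds. For orbifolds with non-empty boundary this list reduces to: the disk with at most one cone point, yielding $D^2\times S^1$, for which the Dehn-filling construction genuinely produces the family $S(0,1;(p,q))$ with $(p,q)$ coprime; the M\"obius band with no cone points and the disk with two cone points of order $2$, both yielding $K\widetilde\times I$ and realizing $S_1$ and $S_2$ respectively, which are non-isomorphic as Seifert fibrations by Proposition \ref{classuptoiso}; and the annulus, yielding $T^2\times I$, whose unique fibration $S(0,2;\,)$ is recovered directly from $\pi_1(T^2\times I)=\Z^2$ together with the observation that every element of $GL(2,\Z)$ is realized by a self-diffeomorphism of $T^2\times I$. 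For closed $X$, the spherical and virtually cyclic bases give precisely the manifolds covered by $S^3$ or $S^2\times S^1$, which are excluded; among the Euclidean bases, only the pair consisting of $S^2(2,2,2,2)$ and the Klein bottle yields the same 3-manifold, namely $K\widetilde\times S^1$, producing $S_3$ and $S_4$.

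The main obstacle I expect is the bookkeeping in this last step: one must verify that no other pair of elementary base orbifolds yields homeomorphic 3-manifolds with non-isomorphic fibrations. This is a finite but delicate check, carried out either by matching Seifert invariants under the moves (1)--(4) together with the Euler number, or by exhibiting an explicit fibre-preserving homeomorphism between the putative coincidences, as for the well-known double cover relation between the two Euclidean base orbifolds underlying $S_3$ and $S_4$.
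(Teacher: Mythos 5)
The paper offers no proof of this proposition: it is imported verbatim from Jaco's lecture notes (Theorems VI.17 and VI.18), so there is no in-paper argument to measure yours against. Your strategy --- make $\langle f\rangle$ intrinsic as the unique maximal infinite cyclic normal subgroup when the base orbifold is hyperbolic, then enumerate the non-hyperbolic bases --- is the standard route in the cited literature, and the paper itself invokes exactly this characterization of the fiber in Remark \ref{regular_fiber}.

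Two steps are too quick, and one is a genuine gap. First, the parenthetical claim that the invariants are recovered from $\bigl(\pi_1(X),\langle f\rangle\bigr)$ because \enquote{the quotient yields the base orbifold} fails as stated: for a bounded hyperbolic orbifold, $\pi_1^{\mathrm{orb}}(\mathcal O)\cong\mathbb F_r*\Z_{p_1}*\cdots*\Z_{p_k}$ determines $r$ and the cone orders but not $(g,h)$ nor the orientability of $|\mathcal O|$ (a once-punctured torus, a thrice-punctured sphere and a once-punctured Klein bottle all have free $\pi_1^{\mathrm{orb}}$ of rank $2$). You must additionally read $h$ off the number of boundary tori of $X$, read the orientability of the base off the conjugation action of $\pi_1(X)$ on $\langle f\rangle$, and extract the classes $q_i\bmod p_i$ (and, for closed $X$, the Euler number) from the extension data rather than from the quotient; only then does Proposition \ref{classuptoiso} apply. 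Second, and more seriously, the closed case is precisely where your first-paragraph argument gives nothing: $\pi_1^{\mathrm{orb}}(S^2(2,2,2,2))\cong\Z^2\rtimes\Z_2$ and $\pi_1(K^2)$ each contain infinitely many infinite cyclic normal subgroups, so the fiber is not intrinsic for Euclidean bases, and the actual content of the closed statement --- that among the manifolds fibering over $S^2(2,2,2,2)$, $K^2$ and $T^2$ only the $e=0$ manifold $K\widetilde\times S^1$ genuinely carries two non-isomorphic fibrations, while the Nil manifolds with $e\neq 0$ do not --- is exactly what you defer to a \enquote{finite but delicate check}. That check is the theorem in the closed case and cannot be left implicit.
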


\subsubsection{Seifert fibrations as $S^1$-bundles over $2$-dimensional orbifolds.} There is a second way to look at (oriented) Seifert fibered manifolds: 
they can be seen as $S^1$-bundles over $2$-dimensional orbifolds having only conical singular points. We refer to the classical references \cite{scott1983geometries} and \cite{thurston1997three} for an account on $2$-dimensional orbifolds. It is worth to stress that this point of view is particularly helpful in order to show the existence of complete, locally homogeneous metrics on Seifert fibered manifolds (\cite{scott1983geometries}, \cite{ohshika1987teichmuller}, \cite{bonahon2002geometric}).\\
Given $S=S(g, h; (p_1, q_1),..., (p_k,q_k))$, we  associate to $S$ its base orbifold $\mathcal O_S$. The orbifold $\mathcal O_S$ has as underlying topological space the surface of genus $g$ and $h$ boundary components, and $k$ conical singular points of order $p_1$,.., $p_k$. There is a natural extension of the usual Euler characteristic to $2$-dimensional orbifolds, known as the \textit{orbifold characteristic} (see \cite{scott1983geometries} or \cite{bonahon2002geometric}), denoted $\chi_{orb}$. The orbifold characteristic allows in first instance to distinguish between the orbifolds admitting a manifold cover (\textit{good orbifolds}) and those who do not admit such a cover (\textit{bad orbifolds}). A complete list of bad $2$-dimensional orbifolds can be found in \cite{scott1983geometries}. In particular, the bad $2$-dimensional orbifolds which may appear as base orbifolds of Seifert fibered manifolds are either of type $S^2(p)$ (underlying surface $S^2$ and a single conical singular point of order $p$) or of type $S^2(p,q)$ (underlying surface $S^2$ and two conical singular point of order $p$ and $q$ with $\mathrm{gcd}(p,q)=1$). The Seifert fibered manifolds which fibers over bad  $2$-dimensional orbifolds belong to a specific and well understood class of quotients of the $3$-sphere: the class of \textit{lens spaces} (see \cite{jankins1983lectures}, section I.4) which in particular have fundamental groups isomorphic to finite cyclic groups.
Secondly,  good orbifolds carry geometric structures and the orbifold characteristic detects which kind of geometry a good orbifold carries: namely, good orbifolds are divided into \textit{elliptic}, \textit{Euclidean} and \textit{hyperbolic orbifolds} depending whether $\chi_{orb}>0$, $\chi_{orb}=0$ or $\chi_{orb}<0$ (see \cite{thurston1997three}, Theorem 13.3.6).\\ 

Our focus is on Seifert fibered manifolds with non-empty boundary, which clearly fibers over $2$-orbifolds with boundary. The set of $2$-orbifolds with boundary consists of the collection of disks having a unique singular point, three Euclidean orbifolds and hyperbolic orbifolds. The Euclidean orbifolds are: $D^2(2,2)$, the disk with two singular points of order $2$, $Mb$, the M\"obius strip, and $C$, the cylinder. Notice that there are only two Seifert fibered manifolds fibering over these orbifolds: $K\widetilde{\times} I$, fibering over $D^2(2,2)$ or over $Mb$, and $T^2\times I$, fibering over C. Finally, there is a unique Seifert fibered manifold fibering over the collection of disks with a unique singular point: the solid torus $D^2\times S^1$. Hence, except for $K\widetilde\times I$, $T^2\times I$, $D^2\times S^1$, all Seifert fibered $3$-manifolds with boundary are fibered over hyperbolic base orbifolds.

\subsubsection{The short exact sequence.} The description of Seifert fibered manifolds as $S^1$-bundles over $2$-dimensional orbifolds provides a short exact sequence (see \cite{scott1983geometries}):
$$1\f\langle f\rangle\f\pi_1(S)\f{\pi_1}^{orb}(\mathcal O_S)\f 1$$
where $\pi_1^{orb}(\mathcal O_S)$ is the \textit{orbifold fundamental group} (for the definition see for instance \cite{boileau2003threedim}, Chapter 2) and the surjective morphism corresponds to the quotient by the normal subgroup of the fiber of $S$. We thus obtain the following presentations for the orbifold fundamental groups:\\
\small
$$\pi_1^{orb}(\mathcal O_S)=\left\langle \bar a_1,\bar b_1,.., \bar b_g, \bar c_1,..,\bar c_k, \bar d_1,..,\bar d_h\,\left |\,\begin{array}{l}
\prod_{i=1}^{g}[\bar a_i,\bar b_i]\cdot\prod_{j=1}^{k} \bar c_j\cdot\prod_{\ell=1}^{h} \bar d_\ell=1\\ 
\bar c_i^{p_i}=1
\end{array}
\right.\right\rangle$$
$$\pi_1^{orb}(\mathcal O_S)=\left\langle \bar a_1,...\,\bar a_{|g|},  \bar c_1,...,\bar c_k, \bar d_1,...,\bar d_h\,\left |\,\begin{array}{l}
\prod_{i=1}^{|g|}\bar a_i^2\cdot\prod_{j=1}^{k} \bar c_j\cdot\prod_{\ell=1}^{h} \bar d_\ell=1\\ 
\bar c_i^{p_i}=1
\end{array}
\right.\right\rangle$$
\normalsize
depending on the sign of $g$, \textit{i.e.} depending on the orientability of the topological space underlying the base orbifold. In the previous presentations is understood that $p(a_i)=\bar a_i$, $p(b_i)=\bar b_i$, $p(c_j)=\bar c_i, p(d_\ell)=\bar d_\ell$, where $p:\pi_1(S)\f \pi_1^{orb}(\mathcal O_S)$ is the surjective morphism of the exact sequence.

\begin{lem}\label{peripheral_orb}
	If $h>0$ and $\mathcal O_S$ is a hyperbolic orbifold, then the collection of subgroups $\{\langle\bar d_i\rangle\}_{i=1}^h$ is a collection of malnormal, conjugately separated subgroups in $\pi_1^{orb}(\mathcal O_S)$
\end{lem}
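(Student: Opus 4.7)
The plan is to transfer the problem to the action of $\pi_1^{orb}(\mathcal{O}_S)$ on the hyperbolic plane $\mathbb{H}^2$ afforded by the hyperbolic structure on $\mathcal{O}_S$. Since $\mathcal{O}_S$ is a hyperbolic $2$-orbifold with non-empty boundary, the uniformization theorem for $2$-orbifolds produces a complete finite-area hyperbolic metric on $\mathcal{O}_S$ with totally geodesic boundary, and consequently a faithful discrete representation $\rho:\pi_1^{orb}(\mathcal{O}_S)\hookrightarrow\mathrm{Isom}(\mathbb{H}^2)$. First I would record the geometric meaning of the generators $\bar d_i$: each $\bar d_i$ is sent to a loxodromic isometry whose axis $\ell_i$ is a lift to $\mathbb{H}^2$ of the $i$-th boundary geodesic of $\mathcal{O}_S$. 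The $\pi_1^{orb}$-orbits of these axes form $h$ distinct equivariant families of pairwise disjoint geodesic lines, one family per boundary component.

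The second step is to show that $\langle\bar d_i\rangle$ is already the full stabilizer $\Stab(\ell_i)$ of $\ell_i$ in $\pi_1^{orb}(\mathcal{O}_S)$. A collar neighborhood of the $i$-th boundary circle embeds in $\mathcal{O}_S$ and lifts to an embedded $\rho(\langle \bar d_i\rangle)$-invariant collar of $\ell_i$ in $\mathbb{H}^2$; since the boundary is a simple closed curve, one traverse of it generates the deck transformation group of this collar, so $\bar d_i$ is primitive in $\pi_1^{orb}(\mathcal{O}_S)$ and $\Stab(\ell_i)=\langle\bar d_i\rangle$.

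With these identifications in hand, both properties are quick corollaries of the uniqueness of the axis of a loxodromic isometry of $\mathbb{H}^2$. For malnormality, assume $g\bar d_i^k g^{-1}=\bar d_i^m$ with $k,m\in\Z\smallsetminus\{0\}$; then $g\bar d_i^k g^{-1}$ has axis $g\cdot\ell_i$ while $\bar d_i^m$ has axis $\ell_i$, so $g\cdot\ell_i=\ell_i$ and therefore $g\in\Stab(\ell_i)=\langle\bar d_i\rangle$. For conjugate separation, if $g\bar d_i^k g^{-1}=\bar d_j^m$ with $i\neq j$ and $k,m\neq 0$, the same reasoning yields $g\cdot\ell_i=\ell_j$, forcing $\ell_i$ and $\ell_j$ to lie in the same $\pi_1^{orb}$-orbit and hence to project to the same boundary component of $\mathcal{O}_S$, a contradiction.

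I expect the main (mildly technical) obstacle to be the identification of $\langle\bar d_i\rangle$ with the full axis stabilizer, namely the primitivity of the boundary loops in $\pi_1^{orb}(\mathcal{O}_S)$; once this is in place, the remainder is purely bookkeeping with the hyperbolic action.
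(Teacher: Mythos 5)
Your proof is correct, but it takes a genuinely different route from the paper's. The paper argues purely combinatorially: since $\mathcal O_S$ has non-empty boundary, the long relation lets one eliminate $\bar d_h$ and exhibit $\pi_1^{orb}(\mathcal O_S)$ as a free product of a free group with the cyclic groups $\Z_{p_1},\dots,\Z_{p_k}$, in which $\bar d_1,\dots,\bar d_{h-1}$ generate infinite cyclic free factors and $\bar d_h$ is a primitive element whose cyclically reduced form has syllable length greater than $1$; malnormality and conjugate separation are then read off from syllable-length considerations, the structure of centralizers in free products, and uniqueness of roots (with a separate remark ruling out $\Z_2*\Z_2$, automatic here because $\chi_{orb}<0$). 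You instead uniformize $\mathcal O_S$ as a compact hyperbolic orbifold with geodesic boundary and work with axes of hyperbolic isometries, reducing everything to the identification $\Stab(\ell_i)=\langle\bar d_i\rangle$, which your embedded-collar argument does deliver: the collar component containing $\ell_i$ is simply connected, its setwise stabilizer is the image of $\pi_1$ of the annular collar, and $\Stab(\ell_i)$ sits inside that stabilizer, so in particular no elliptic or orientation-reversing element can flip $\ell_i$. Your approach buys symmetry --- all $h$ boundary subgroups are treated identically, whereas the paper must handle $\bar d_h$ separately as a cyclically reduced word rather than a free factor --- and it sidesteps the root-extraction and $\Z_2*\Z_2$ subtleties entirely; the price is the appeal to the hyperbolization of $2$-orbifolds with boundary, a heavier (though standard) input than the elementary free-product combinatorics of the paper, whose normal-form flavour also matches the length arguments used later in Section 4.
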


\begin{proof}
	Since $\mathcal O_S$ is a compact $2$-orbifold with non empty boundary  we can express $d_h$ in terms of the other generators. We see that $\pi_1^{orb}(\mathcal O_S)\cong \mathbb F_{2g+h-1}*\Z_{p_1}*\cdots \Z_{p_k}$ freely generated by $\{\bar a_i, \bar b_i\}_{i=1}^g\cup\{\bar d_\ell\}_{\ell=1}^{h-1}\cup\{\bar c_j\}_{j=1}^k$ if the genus $g$ of $|\mathcal O|$ is positive  and $\pi_1^{orb}(\mathcal O_S)\cong\mathbb F_{g+h-1}*\Z_{p_1}*\cdots *\Z_{p_k}$ freely generated by $\{\bar a_i\}_{i=1}^{|g|}\cup\{\bar d_\ell\}_{\ell=1}^{h-1}\cup\{\bar c_j\}_{j=1}^k$ if the genus $g$ of $|\mathcal O_S|$ is negative. Moreover, as the orbifold $\mathcal O_S$ has negative orbifold Euler characteristic we see that the previous free products are always different from $\Z_2*\Z_2$. Observe that $\{\langle \bar d_i\rangle\}_{i=1}^{h-1}$ is a collection of malnormal, conjugately separated infinite cyclic subgroups. In fact, they represent by definition $h-1$ infinite cyclic free factors of the free product which determines $\pi_1^{orb}(\mathcal O_S)$, because $g\bar d_ig^{-1}$ has syllable length greater than $1$, unless $g=	\bar d_i^{k}$ for some $k\in\Z$. On the other hand, writing down the expression for $\bar d_h$ in terms of the other generators we see that it is a primitive element of infinite order in a free product of cyclic groups, which can be represented as a cyclically reduced word of length strictly greater than one. It follows that $\bar d_h$ is not conjugate to any other $\bar d_i$ (because their cyclically reduced length is equal to $1$). On the other hand, if $g\bar d_hg^{-1}=\bar d_h^{k}$ then we would have $g^2\bar d_hg^{-2}=\bar d_h$. Since the centralizer of a primitive element of cyclically reduced length greater than $1$ in a free product is infinite cyclic, generated by the primitive element we see that $g^2=\bar d_{h}^k$. But in a free product of cyclic groups any element of infinite order possess a unique root,  and since $\bar d_h$ is primitive we conclude that $k=2m$ and $g=\bar d_h^m$, proving that $\langle \bar d_h\rangle$ is malnormal in $\pi_1^{orb}(\mathcal O_S)$.
\end{proof}

\begin{rmk}[The regular fiber] \label{regular_fiber} Let $X$ be a Seifert fibered $3$-manifold with non-empty boundary, not homeomorphic to $T^2\times I$ and $D^2\times S^1$.  We recall (subsection \S2.1.1) that any Seifert fibration of $X$ determines a presentation of $\pi_1(X)$ and a maximal, infinite cyclic, normal subgroup: the subgroup corresponding to the homotopy class of the regular fibers of the Seifert fibration.  From Proposition \ref{classupdiffeo} we know that if $X$ has non-empty boundary and it is not homeomorphic to $T^2\times I$, $D^2\times I$ or $K\widetilde{\times} I$, all Seifert fibrations are isomorphic. On the other hand, by the explicit description of the fundamental group of a Seifert fibered manifold and arguments similar to the ones used in Lemma \ref{peripheral_orb} it is not difficult to check that $\pi_1(X)$ possesses a unique maximal, infinite cyclic normal subgroup. Hence, despite the fact that distinct Seifert fibrations of $X$ in the same isomorphism class give rise to distinct presentations of $\pi_1(X)$, they all determine the same maximal, infinite cyclic, normal subgroup. In view of this fact we shall call the generator --- univoquely defined up to take inverses --- of this unique maximal, infinite cyclic, normal subgroup \textit{the regular fiber of $\pi_1(X)$}, since this element  does no longer depend on the particular fibration.\\
Similarly, if we look at $K\widetilde{\times}I$ there are only two isomorphism classes of Seifert fibrations, which are identified by the only two distinct maximal, infinite cyclic, normal subgroups in $\pi_1(K\widetilde{\times}I)=\langle a, f\,|\, afa^{-1}=f^{-1}\rangle$, namely the subgroups $\langle a^2\rangle$ and $\langle f\rangle$. We shall call $a^2$ and  $f$ \textit{the regular fibers of $K\widetilde{\times} I$}.
\end{rmk}

The next lemma states that the peripheral subgroups $\langle d_i, f\rangle=p^{-1}(\langle \bar d_i\rangle)$ of $S$ partially inherit the properties of the peripheral subgroups of $\mathcal O_S$:

\begin{lem}\label{Seifert}
	Let $X$ be a Seifert fibered manifold with non-empty boundary which fibers over a hyperbolic base orbifold and let $f\in\pi_1(X)$ be the regular fiber.
	\begin{itemize} 
		\item[(i)] If $T^{\bdr}\subset\bdr X$ is a boundary torus and  $g\in\pi_1(X)\smallsetminus\pi_1(T^{\bdr})$, then
		 $$g\,\pi_1(T^{\bdr})\,g^{-1}\cap\pi_1(T^{\bdr})=\langle f\rangle$$
		
		\item[(ii)] If $T_{1}^{\bdr},\,T_{2}^{\bdr}\subset\bdr X$ are two  boundary tori and   $g\in\pi_1(X)$, then $$g\pi_1(T_{1}^{\bdr})g^{-1}\cap\pi_1(T_{2}^{\bdr})=\langle f\rangle$$
	\end{itemize}
\end{lem}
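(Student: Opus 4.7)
The plan is to push both intersections forward to the orbifold fundamental group via the short exact sequence
\[
1 \longrightarrow \langle f\rangle \longrightarrow \pi_1(X) \xrightarrow{\;p\;} \pi_1^{orb}(\mathcal O_S) \longrightarrow 1
\]
recalled in \S2.1.5, and then invoke Lemma \ref{peripheral_orb}, which tells us that the collection $\{\langle \bar d_i\rangle\}$ of peripheral cyclic subgroups of $\pi_1^{orb}(\mathcal O_S)$ is malnormal and conjugately separated (this uses precisely the hyperbolicity of $\mathcal O_S$).

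First I would record two structural facts. (a) The subgroup $\langle f\rangle$ is normal in $\pi_1(X)$: this is immediate from the explicit presentations in \S2.1.1, since $f$ is central when $g\geq 0$ and satisfies $a_i f a_i^{-1}=f^{-1}$ when $g<0$. Consequently, for every $g\in\pi_1(X)$ one has $g\langle f\rangle g^{-1}=\langle f\rangle\subseteq \pi_1(T_j^{\bdr})$ for every boundary torus, which yields at once the inclusion $\langle f\rangle\subseteq g\pi_1(T_1^{\bdr})g^{-1}\cap \pi_1(T_2^{\bdr})$ in both (i) and (ii). (b) The peripheral subgroup is exactly the full preimage: $\pi_1(T_i^{\bdr})=p^{-1}(\langle \bar d_i\rangle)=\langle d_i,f\rangle$, as is clear from the topological construction of \S2.1.1, where each boundary torus of $X$ fibers over a boundary loop of $|\mathcal O_S|$ with fiber $f$.

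For the reverse inclusion, let $h\in g\pi_1(T_1^{\bdr})g^{-1}\cap \pi_1(T_2^{\bdr})$ and project: writing $\bar g=p(g)$, we have $p(h)\in \bar g\langle \bar d_1\rangle \bar g^{-1}\cap \langle \bar d_2\rangle$. In case (i), $T_1^{\bdr}=T_2^{\bdr}=T^{\bdr}$, and the hypothesis $g\notin \pi_1(T^{\bdr})=p^{-1}(\langle \bar d\rangle)$ translates into $\bar g\notin \langle \bar d\rangle$; by the malnormality part of Lemma \ref{peripheral_orb} the intersection $\bar g\langle \bar d\rangle \bar g^{-1}\cap \langle \bar d\rangle$ is trivial, so $p(h)=1$ and $h\in \ker p=\langle f\rangle$. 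In case (ii), $T_1^{\bdr}\neq T_2^{\bdr}$ corresponds to distinct boundary cyclic subgroups $\langle \bar d_1\rangle\neq \langle \bar d_2\rangle$, and the conjugate-separation part of Lemma \ref{peripheral_orb} again forces $p(h)=1$, hence $h\in \langle f\rangle$.

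The core of the argument is really Lemma \ref{peripheral_orb}; the main obstacle (if any) is bookkeeping: checking that $\pi_1(T_i^{\bdr})$ is indeed the whole preimage $p^{-1}(\langle \bar d_i\rangle)$ rather than a proper subgroup, and that the conjugate-separation statement of Lemma \ref{peripheral_orb} applies to a \emph{family} of cyclic subgroups (not just to each one individually), which it does thanks to the free-product decomposition of $\pi_1^{orb}(\mathcal O_S)$ exhibited there. Once these two checks are made, both (i) and (ii) reduce to a single short diagram chase.
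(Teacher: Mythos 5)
Your proof is correct and follows exactly the route the paper itself indicates: the paper does not write out a proof of Lemma \ref{Seifert} but states that it ``can be proved via combinatorial group theory arguments using Lemma \ref{peripheral_orb} and the short exact sequence,'' which is precisely your diagram chase through $p:\pi_1(X)\to\pi_1^{orb}(\mathcal O_S)$ together with the normality of $\langle f\rangle$ and the identification $\pi_1(T_i^{\bdr})=p^{-1}(\langle\bar d_i\rangle)$. Nothing further is needed.
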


The previous lemma is implicit in  Wilton-Zalesskii's proof of the existence of acylindrical splittings for irreducible $3$-manifold groups (\cite{wilton2010profinite}, section \S2) and will play an important role  in the proof of Theorem \ref{AI_bdrtori}. It can be proved via combinatorial group theory arguments using Lemma \ref{peripheral_orb} and the short exact sequence.

\subsubsection{Further remarks about $\pi_1(K\widetilde{\times}I)$.} We conclude this section on Seifert fibered manifolds  resuming in the next lemma few straightforward facts about the peripheral subgroups and the regular fibers of $\pi_1(K\widetilde{\times}I)$.

\begin{lem}\label{kappatwistato}
	Let $\pi_1(K\widetilde{\times} I)=\langle a, f\, |\, afa^{-1}=f^{-1}\rangle$ be the presentation associated to the Seifert fibration $S(-1,1;)$ of $K\widetilde{\times}I$. Then:
	\begin{itemize}
		\item[(i)] $\pi_1(\bdr(K\widetilde\times I))=\langle a^2, f\rangle$ is a normal subgroup of $\pi_1(K\widetilde{\times} I)$;
		\item[(ii)] the two regular fibers of $\pi_1(K\widetilde{\times}I)$ are $a^2$ and $f$;
		\item[(iii)] if $g=a^{2p}f^q$ (with $\mathrm{gcd}(p,q)=1$) is a primitive element in $\pi_1(K\widetilde{\times} I)$ which is not equal to $a^{\pm 2}$ or $f^{\pm 1}$, then $a^{\pm 1}\langle a^{2p}f^q\rangle a^{\mp 1}\cap\langle a^{2p} f^q\rangle=\{1\}$.
	\end{itemize}
	\end{lem}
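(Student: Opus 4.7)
All three statements are direct algebraic consequences of the presentation $\pi_1(K\widetilde\times I)=\langle a,f\mid afa^{-1}=f^{-1}\rangle$, together with the normal form $a^if^j$ that every element admits because the group is the semidirect product $\langle f\rangle\rtimes\langle a\rangle=\Z\rtimes\Z$, with $a$ acting by inversion on $\langle f\rangle$. My proof will consist of one short computation for each part, with no appeal to Bass--Serre theory or to the surrounding $3$-manifold geometry.

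For part (i), I would simply check that $\langle a^2,f\rangle$ is stable under conjugation by each generator. Conjugation by $f$ preserves the subgroup trivially, and conjugation by $a$ sends $a^2\mapsto a^2$ and $f\mapsto f^{-1}$, both of which lie in $\langle a^2,f\rangle$; therefore $\langle a^2,f\rangle\trianglelefteq\pi_1(K\widetilde\times I)$. Equivalently (and perhaps more cleanly), the quotient by this subgroup is generated by the image of $a$ and satisfies $a^2=1$, so the subgroup has index $2$ and is automatically normal.

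For part (ii), the relation $afa^{-1}=f^{-1}$ says $\langle f\rangle$ is normal, and squaring gives $a^2 f a^{-2}=f$, so $a^2$ commutes with $f$, and of course with $a$; thus $a^2$ is central. This shows both $\langle a^2\rangle$ and $\langle f\rangle$ are infinite cyclic normal subgroups. To see they are maximal (i.e.\ are actually regular fibers, using Remark \ref{regular_fiber}), I would use the normal form: an arbitrary element $g=a^if^j$ satisfies $aga^{-1}=a^if^{-j}$ and $fgf^{-1}=a^if^{(-1)^i\cdot 0+ j-\text{(correction)}}$, a brief direct calculation showing that $\langle g\rangle$ is normal only when either $j=0$ and $i$ is even, or $i=0$. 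The two resulting families of maximal infinite cyclic normal subgroups are $\langle a^2\rangle$ and $\langle f\rangle$, which are therefore precisely the regular fibers.

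For part (iii), let $g=a^{2p}f^q$ with $\gcd(p,q)=1$ and $p,q\neq 0$ (the excluded cases $p=0$ or $q=0$ are exactly $f^{\pm1}$ and $a^{\pm2}$). Using centrality of $a^2$ one computes by induction $g^m=a^{2pm}f^{qm}$ for every $m\in\Z$, and the relation yields $aga^{-1}=a^{2p}f^{-q}$, hence $(aga^{-1})^n=a^{2pn}f^{-qn}$. An element of $a\langle g\rangle a^{-1}\cap\langle g\rangle$ then gives an equality $a^{2pm}f^{qm}=a^{2pn}f^{-qn}$; uniqueness of the normal form forces $2pm=2pn$ and $qm=-qn$, which with $p,q\neq 0$ yields $m=n$ and then $2qm=0$, so $m=n=0$. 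The same computation handles conjugation by $a^{-1}$. The main obstacle, such as it is, is simply keeping track of signs and of the fact that $a^2$ (but not $a$) is central; once that is clear, each part reduces to one line of computation in the normal form.
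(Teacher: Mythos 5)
Your proof is correct and coincides with what the paper intends: Lemma \ref{kappatwistato} is stated there without proof, as a list of ``straightforward facts'' about the Klein bottle group $\Z\rtimes\Z$, and your normal-form computations (index-$2$/generator check for (i), classification of cyclic normal subgroups for (ii), uniqueness of the form $a^{2pm}f^{\pm qm}$ for (iii)) are exactly the verification being taken for granted. The only blemish is the garbled exponent in your formula for $fgf^{-1}$ in part (ii) --- it should read $fa^if^jf^{-1}=a^if^{(-1)^i+j-1}$ --- but your stated conclusion (normality of $\langle a^if^j\rangle$ forces $i=0$, or $j=0$ with $i$ even) is right, so the argument stands.
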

 
 \subsection{ The JSJ-decomposition Theorem}\label{JSJDEC}
One of the cornerstones of $3$-manifolds theory  is the existence of a decomposition for compact, irreducible $3$-manifolds along embedded, non boundary parallel, incompressible tori (\cite{jaco1979seifert}, \cite{johannson1979homotopy}):

\begin{JSJdecthm}
 Let $X$ be a compact, irreducible 3-manifold.
In the interior of $X$, there exists a family $\mathcal C=\{T_1, . . . , T_r\}$ of disjoint tori that are incompressible and not boundary parallel, with the following properties:
\begin{itemize}
\item[(i)] each connected component of $X\smallsetminus\mathcal C$ is either a Seifert manifold or is atoroidal; 
\item[(ii)] the family $\mathcal C$ is minimal among those satisfying {\rm (i)}.
\end{itemize}
Moreover, such a family $\mathcal C$ is unique up to ambient isotopy.
\end{JSJdecthm}


Following Thurston \cite{thursto1982kleinian}, we say that an irreducible $3$-manifold $X$ is \textit{homotopically atoroidal} if every $\pi_1$-injective map from the torus to the irreducible manifold is homotopic to a map into the boundary. Being homotopically atoroidal is a stronger property than being atoroidal, where we consider embeddings instead of $\pi_1$-injective maps. Nevertheless, the two notions agree except for some Seifert fibered manifolds. In fact, the atoroidal, Seifert fibered manifolds  are the $3$-manifolds which admit one of the Seifert fibrations from the list IV.2.5 in \cite{jaco1979seifert} (see \cite{jaco1979seifert} Lemma IV.2.6). Among the manifold in the list only a few of them are homotopically atoroidal. To our purposes it is sufficient to say that the list of compact, homotopically atoroidal, Seifert fibered manifold with non-empty boundary is reduced to the following list\footnote{In order to check which closed Seifert fibered manifold among the atoroidal are homotopically atoroidal a way to proceed is to compute the fundamental groups and consider the ones which do not admit a subgroup isomorphic to $\Z^2$.}: $K\widetilde{\times} I$, $T^2\times I$, $D^2\times S^1$.\\
In 1982 (\cite{thursto1982kleinian}) Thurston proposed the Geometrization conjecture and announced a series of papers proving the Hyperbolization Theorem for Haken $3$-manifolds\footnote{We recall that an Haken $3$-manifold is an irreducible $3$-manifolds which contains a properly embedded, $2$-sided, incompressible surface.} (\cite{thurston1986hyperbolic}, \cite{thurston1998hyperbolicII}, \cite{thurston1998hyperbolicIII}  ---the last two are unpublished---):
\begin{hyperbolization}[Thurston]
	The interior of a  compact, irreducible, Haken $3$-manifold admits a complete hyperbolic metric if and only if the manifold is homotopically atoroidal and not homeomorphic to $K\widetilde{\times} I$.
\end{hyperbolization}
 A complete proof of Thurston's Hyperbolization Theorem can be found in \cite{otal1996theorem}, \cite{otal1998thurston} and \cite{kapovich2001hyperbolic}.  Since compact, irreducible $3$-manifolds with non-empty boundary and not homeomorphic to $D^2\times S^1$ are Haken manifolds, Thurston's result implies that such a $3$-manifold admits a complete hyperbolic metric if and only if is homotopically atoroidal and not homeomorphic to $K\widetilde{\times}I$. Moreover, if the $3$-manifold has only toroidal boundary and in addition is not homeomorphic to $T^2\times I$, the complete hyperbolic metric has finite volume. Thanks to the work of Perelman (\cite{perelman2002entropy}, \cite{perelman2003ricci}, \cite{perelman2003finiteext}) we are allowed to replace the assumption \lq\lq\textit{Haken}" with the assumption \lq\lq\textit{with infinite fundamental group}".
Coherently with the picture provided by the Hyperbolization Theorem it is customary to call \textit{manifolds of hyperbolic type} the homotopically atoroidal, irreducible $3$-manifolds (possibly with non empty boundary) not homeomorphic to $D^2\times S^1$, $T^2\times I$ or $K\widetilde{\times}I$.\\

\noindent Assume that $X\neq T^2\times I, D^2\times S^1$  and $\bdr X\neq\varnothing$. Then  $T^2\times I, D^2\times S^1$ cannot appear as a JSJ-component: the first by minimality of the collection $\mathcal C$; the second because a JSJ-torus bounding $D^2\times S^1$ would not be incompressible. Hence if $X_k$  is a JSJ-component of $X$ one of the following mutually exclusive conditions holds:
\begin{itemize}
	\item $X_k$ is of hyperbolic type;
	\item $X_k$  is Seifert fibered with hyperbolic base orbifold;
	\item $X_k$ is homeomorphic to $K\widetilde{\times}I$.
\end{itemize}


The next lemma is well known (see for example \cite{aschenbrenner20153}, Lemma 1.5.3) and it gives a necessary and sufficient condition for the gluing of two Seifert fibered $3$-manifolds not to be Seifert fibered.

 \begin{lem}\label{Gluingiso}
	Let $X, Y\neq D^2\times S^1$ be two  Seifert fibered manifolds (possibly $X=Y$) with non-empty boundary. Let $g: T_X^{\bdr}\f T_Y^{\bdr}$ be a diffeomorphism between two boundary tori of $X$. Let  $Z$ be the resulting manifold.  The following are equivalent:
	\begin{itemize}
		\item[(i)] the manifold $Z$ is not Seifert fibered;
		\item[(ii)]  for any choice  of regular fibers $f_X$ in $\pi_1(X)$ and $f_Y$  in $\pi_1(Y)$ the following condition holds:  $g_*(\langle f_X\rangle)\cap\langle f_{Y}\rangle=\{1\}$.
	\end{itemize}
\end{lem}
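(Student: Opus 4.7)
\medskip

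\noindent\textbf{Plan of proof.} I would work with the contrapositive: $Z$ \emph{is} Seifert fibered if and only if there exist regular fibers $f_X\in\pi_1(X)$ and $f_Y\in\pi_1(Y)$ such that $g_*(\langle f_X\rangle)\cap\langle f_Y\rangle\neq\{1\}$. The whole argument then reduces to the principle that a Seifert fibration on $X$ and a Seifert fibration on $Y$ glue through $g$ to a Seifert fibration on $Z$ exactly when their regular fibers match on the torus $T=T_X^{\bdr}=T_Y^{\bdr}$.

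For the ``if'' direction, assume there exist regular fibers $f_X,f_Y$ (coming from some fibrations $S_X$ on $X$ and $S_Y$ on $Y$) with a non-trivial intersection of cyclic subgroups. Then $g_*(f_X)^m = f_Y^n$ in $\pi_1(T_Y^{\bdr})\cong\Z^2$ for some non-zero integers $m,n$. The regular fibers are primitive elements of the peripheral $\Z^2$, because near each boundary torus a Seifert fibration is a genuine $S^1$-bundle, and the diffeomorphism $g_*$ preserves primitivity. Hence $m=\pm1$, $n=\pm1$, and $g_*(f_X)=f_Y^{\pm1}$. Choosing compatible orientations of the fibers, the fibrations $S_X$ and $S_Y$ then glue along $T$ to a Seifert fibration on $Z$.

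For the ``only if'' direction, assume $Z$ admits some Seifert fibration $S_Z$. The torus $T$ is incompressible in $Z$: since $X,Y\neq D^2\times S^1$, the boundary tori of $X$ and $Y$ are $\pi_1$-injective, and Van Kampen transfers this to $T\subset Z$. By the classical theorem of Waldhausen (compare the references gathered at the beginning of \S\ref{JSJDEC}), an incompressible embedded torus in a Seifert fibered $3$-manifold can be isotoped to be either vertical or horizontal with respect to $S_Z$. If $T$ is vertical, the restriction of $S_Z$ to $X$ and to $Y$ yields Seifert fibrations on them whose regular fibers glue along $T$; this is exactly $g_*(f_X)=f_Y^{\pm1}$, producing the desired non-trivial intersection.

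The main obstacle is ruling out the horizontal case. A horizontal torus $T$ would be a finite orbifold covering of the base $\mathcal O_{S_Z}$, which forces $\mathcal O_{S_Z}$ to be closed with $\chi_{orb}=0$; in particular $Z$ would itself be closed. Hence whenever $Z$ has non-empty boundary (as soon as $X$ or $Y$ has more than one boundary torus) horizontality is impossible and the argument is complete. In the remaining case $Z$ closed with $\chi_{orb}(\mathcal O_{S_Z})=0$, the manifold $Z$ belongs to a short explicit list of small Seifert fibered manifolds (torus or Klein-bottle bundles), and the ambiguity of the Seifert fibration supplied by Proposition \ref{classupdiffeo} allows one to replace $S_Z$ by an alternative fibration with respect to which $T$ is vertical, bringing us back to the previous case.
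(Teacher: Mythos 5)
The paper does not actually prove Lemma \ref{Gluingiso}: it only points to \cite{aschenbrenner20153}, Lemma 1.5.3. Your argument is therefore a genuine reconstruction rather than a parallel of the paper's proof, and its skeleton is the standard one: the ``fibers match $\Rightarrow$ fibrations glue'' direction is correct (primitivity of the fiber in each peripheral $\Z^2$ is exactly the point, as you note), and attacking the converse via the vertical/horizontal position theorem for the incompressible torus $T$ is the right idea; the vertical case is handled correctly.

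The gap is in the horizontal case. You correctly deduce that a horizontal $T$ forces $Z$ to be closed with Euclidean base orbifold, and cutting $Z$ along the horizontal $T$ yields an $I$-bundle over a closed surface, so that $X$ and $Y$ must each be $T^2\times I$ or $K\widetilde{\times}I$. But your claim that Proposition \ref{classupdiffeo} then supplies an alternative fibration of $Z$ in which $T$ becomes vertical is false in general: the mapping torus $Z$ of an order-$3$, $4$ or $6$ automorphism of $T^2$ is a flat Seifert manifold fibering over $S^2(3,3,3)$, $S^2(2,4,4)$ or $S^2(2,3,6)$; by Proposition \ref{classupdiffeo} its Seifert fibration is unique, its fiber class generates the center $\langle t^6\rangle$ (say), and since $\pi_1(T)\cap\langle t^6\rangle=\{1\}$ the torus fibre $T$ is isotopic to no vertical torus. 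These examples, where $X=Y=T^2\times I$, also expose a defect of the statement itself as you are reading it: every primitive peripheral class of $T^2\times I$ is the regular fiber of some Seifert fibration, so condition (ii) fails for \emph{every} gluing, including Anosov ones for which $Z$ is a $Sol$-manifold and not Seifert fibered. The case $X$ or $Y\cong T^2\times I$ must therefore be excluded (consistently with Remark \ref{regular_fiber}, which defines ``the regular fiber'' only away from $T^2\times I$ and $D^2\times S^1$, and with the fact that $T^2\times I$ is never a JSJ-piece; note also that $T$ is boundary-parallel, hence not essential, when exactly one piece is $T^2\times I$, so the vertical/horizontal theorem does not even apply there). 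Once $T^2\times I$ is excluded, the only horizontal configuration left is $X\cong Y\cong K\widetilde{\times}I$ with $T$ separating, and this must be finished by hand: the possible fiber classes on each side are exactly $a^{\pm2}$ and $f^{\pm1}$ (Lemma \ref{kappatwistato}), and the resulting torus semi-bundle is Seifert fibered precisely when $g_*$ carries one of $a^{2},f$ to $\pm a^{2}$ or $\pm f$, i.e.\ precisely when (ii) fails --- in which case $T$ is in fact vertical for the glued fibration. An appeal to Proposition \ref{classupdiffeo} does not substitute for this check.
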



Finally we  record a result concerning the abelian peripheral subgroups of manifolds of hyperbolic type. In analogy with the case of  hyperbolic $2$-orbifolds with boundary (Lemma \ref{peripheral_orb}), the collection of the abelian peripheral subgroups of an irreducible $3$-manifold of hyperbolic type are malnormal and conjugately separated:

\begin{lem}\label{Hyperbolic}
		Let $X$ be an irreducible $3$-manifold of hyperbolic type with non-empty boundary and let $T^{\bdr}, T_1^{\bdr}, T_2^{\bdr}$ be toroidal boundary components in $\bdr X$.
		\begin{itemize}
		\item[(i)] if $g\in\pi_1(X)\smallsetminus\pi_1(T^{\bdr})$, then $g\pi_1(T^{\bdr})g^{-1}\cap\pi_1(T^{\bdr})=\{1\}$;
	    	\item[(ii)] if $g\in\pi_1(X)$, then $g\pi_1(T_{1}^{\bdr})g^{-1}\cap\pi_1(T_{2}^{\bdr})=\{1\}$.
	    \end{itemize}
\end{lem}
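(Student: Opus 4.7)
The plan is to invoke the Hyperbolization Theorem to endow $\operatorname{int}(X)$ with a complete hyperbolic metric and then reduce both statements to classical facts about parabolic subgroups of Kleinian groups. Since $X$ is of hyperbolic type with non-empty boundary, the Hyperbolization Theorem (together with the work of Perelman where needed) gives a discrete, torsion-free, faithful representation $\pi_1(X) \hookrightarrow \operatorname{Isom}^+(\mathbb{H}^3)$, with image a Kleinian group $\Gamma$, such that $\operatorname{int}(X) \cong \mathbb{H}^3/\Gamma$. The hyperbolic structure has finite volume if every boundary component of $X$ is a torus, and is geometrically finite otherwise; in both situations the toroidal boundary components of $X$ correspond bijectively to the cusps of $\mathbb{H}^3/\Gamma$.

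The first step is to identify, up to conjugation, the peripheral subgroup $\pi_1(T)$ of a toroidal boundary component $T$ with the full stabilizer $\Gamma_{p_T}$ of some point $p_T\in\partial_\infty\mathbb{H}^3$, which is a maximal parabolic subgroup of $\Gamma$ (isomorphic to $\mathbb{Z}^2$). Two facts about this correspondence are crucial and standard in Kleinian group theory: (a) $\pi_1(T)$ equals the \emph{full} stabilizer $\Gamma_{p_T}$ (maximality); (b) distinct toroidal boundary components $T_1\neq T_2$ give rise to parabolic fixed points $p_{T_1}, p_{T_2}$ lying in distinct $\Gamma$-orbits, i.e.\ $g\cdot p_{T_1}\neq p_{T_2}$ for every $g\in\Gamma$.

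The second step is the classical dichotomy on which the proof hinges: every non-trivial element of a maximal parabolic subgroup $\Gamma_p$ is itself a parabolic isometry of $\mathbb{H}^3$, and a parabolic isometry fixes exactly one point of $\partial_\infty\mathbb{H}^3$, namely $p$. Granted this, statement (i) is immediate: any non-trivial $h\in g\,\pi_1(T)g^{-1}\cap\pi_1(T)$ is parabolic and fixes both $p_T$ (from membership in $\pi_1(T)$) and $g\cdot p_T$ (from membership in $g\,\pi_1(T)g^{-1}$), forcing $g\cdot p_T=p_T$, hence $g\in\Gamma_{p_T}=\pi_1(T)$, contradicting the hypothesis $g\notin\pi_1(T)$. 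Statement (ii) follows by the same argument: a non-trivial element in $g\,\pi_1(T_1^{\bdr})g^{-1}\cap\pi_1(T_2^{\bdr})$ would be parabolic and would fix both $p_{T_2}$ and $g\cdot p_{T_1}$, yielding $g\cdot p_{T_1}=p_{T_2}$, which is precluded by fact (b) above.

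The main point requiring care is that $X$ may have higher-genus boundary components in addition to the tori $T, T_1^{\bdr}, T_2^{\bdr}$; in that case the hyperbolic metric on $\operatorname{int}(X)$ is only geometrically finite and of infinite volume. The identification of toroidal boundary components with cusps, and of peripheral $\mathbb{Z}^2$'s with maximal parabolic subgroups satisfying (a) and (b), nonetheless holds in this wider generality, so the argument above is unaffected. No combinatorial group theory is needed; the proof is purely geometric once the hyperbolic structure is available.
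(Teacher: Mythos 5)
Your proof is correct. The paper does not prove Lemma \ref{Hyperbolic} itself but defers to \cite{delaharpe2014malnormalsubgroups}, \S 3, Example 6, and your argument --- identifying the peripheral $\mathbb{Z}^2$'s with the maximal parabolic stabilizers of the cusps (which persist, with properties (a) and (b), in the geometrically finite case with higher-genus boundary) and using that a parabolic isometry of $\mathbb{H}^3$ fixes a unique point of $\partial_\infty\mathbb{H}^3$ --- is precisely the standard argument given there.
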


\noindent A reference for the previous result is  \cite{delaharpe2014malnormalsubgroups}, \S3, Example 6.



\section{Abelian peripheral subgroups of irreducible $3$-manifolds}

Throughout this section $X$ will be an irreducible $3$-manifold with non-empty boundary, not homeomorphic to $K\widetilde{\times}I$, $T^2\times I$, $D^2\times S^1$. 
 Let $\{X_k\}_{k=1}^{N}$ be the JSJ-components of $X$, let  $\{T_{(k,i)}^{\bdr}\}_{i=1}^{m_k}$  be the set of the boundary tori of $X$ which belong to the JSJ-component $X_k$ (possibly $m_k=0$, if the previous collection is empty) and $\{T_{(k_1,k_2)}^{j}\}_{j=1}^{n_{(k_1,k_2)}}$  ($k_1\le k_2$, possibly $n_{(k_1,k_2)}=0$) be the collection of the JSJ-tori which bound both the JSJ-components $X_{k_1},\, X_{k_2}$.\\
 We shall also assume to have fixed:
 \begin{itemize}
  \item a basepoint $x_0$ in $X$, basepoints $x_k$ in the interior of each JSJ-component $X_{k}$, and paths from $x_0$ to the basepoints $x_k$;
\item a basepoint for each boundary torus of the JSJ-component $X_k$ and paths connecting them to the chosen basepoint $x_k\in X_k$.
\end{itemize}
With this proviso the fundamental groups of the tori $T_{(k,i)}^{\bdr}$, $T_{(\ell, k)}^{j}$, $T_{(k,\ell)}^r$ are naturally identified with suitable subgroups of $\pi_1(X_k)$, and the fundamental groups of the JSJ-components are naturally identified with subgroups of $\pi_1(X)$.

\subsection{The peripheral extension of the JSJ-splitting} We shall first recall the definition of the JSJ-splitting of the fundamental group of the irreducible $3$-manifold $X$. For the general theory of graph or groups we refer to \cite{bass1976remarks}, \cite{serre1977arbres}, \cite{serre1980trees}.

\begin{defn}[JSJ-splitting]
We define the JSJ-splitting of $\pi_1(X)$ as the graph of groups $(\mathscr G_X,\Gamma_X)$ of $\pi_1(X)$ (with the assignement of  an orientation for the edges) defined in the following way:
\begin{itemize}
	\item $\Gamma_X$ is the graph defined by the vertices 
	\small $\mathsf V(\Gamma_X)=\{\mathsf v_k\}_{k=1}^{N},$\normalsize\, and the edges \small$\mathsf E(\Gamma_X)=\left\{\mathsf e_{(k_1, k_2)}^j, \overline{\mathsf e}_{(k_1, k_2)}^j \;|\; k_1\le k_2,\;k_1=1,..,N\mbox{ and }j=1,..,n_{(k_1,k_2)}\right\}$
	\normalsize
	where $\overline{\mathsf e_{(k_1, k_2)}^j}=\overline{\mathsf e}_{(k_1,k_2)}^j$ and where $s(\mathsf e_{(k_1,k_2)}^j)=\mathsf v_{k_1}$, $t(\mathsf e_{(k_1, k_2)}^j)=\mathsf v_{k_2}$ (given an oriented edge $\mathsf e$ we define by $s(\mathsf e)$ and $t(\mathsf e)$ respectively the source and the target  of the edge $\mathsf e$). We choose the following orientation of $(\Gamma_X,\mathscr G_X)$: $\mathsf E_{+}(\Gamma_X)=\{\mathsf e_{(k_1,k_2)}^j\;|\;k_1\le k_2\mbox{ and }j=1,..,n_{(k_1,k_2)}\}$. \\
	\item The collection of subgroups $\mathscr G_X$ is given by
	\small
	 $$G_{\mathsf v_k}=\pi_1(X_k),\quad G_{\mathsf e_{(k_1,k_2)}^j}=G_{\overline{\mathsf e}_{(k_1,k_2)}^j}=\pi_1(T_{(k_1,k_2)}^j)\cong\Z^2$$
	 \normalsize
	 and the monomorphisms 
	 \small
	 $$\varphi_{(k_1,k_2; j)}:G_{\mathsf e_{(k_1,k_2)}^j}\f G_{\mathsf v_{k_1}},\;\;\; \overline \varphi_{(k_1,k_2; j)}: G_{\mathsf e_{(k_1,k_2)}^j}\f G_{\mathsf v_{k_2}}$$
	 \normalsize
	 are such that the composition $\psi_{(k_1,k_2; j)}=\overline{\varphi}_{(k_1,k_2; j)}\circ\varphi_{(k_1,k_2; j)}^{-1}$ coincides with the isomorphism induced by the gluing map between the two boundary tori of the JSJ-components $X_{k_1}$, $X_{k_2}$ giving rise to the JSJ-torus $T_{(k_1,k_2)}^j$.\\
\end{itemize}
\end{defn}
\noindent We shall now define the announced extension of the graph of groups $(\mathscr G_X, \Gamma_X)$:

 \begin{defn}[Peripheral extension of the JSJ-splitting]
We shall denote by  $(\widehat{\mathscr G}_X,\widehat{\Gamma}_X)$  the \textit{peripheral extension of the JSJ-splitting of $X$}, \textit{i.e.} the graph of groups  defined as follows:
\begin{itemize}
	\item $\widehat{\Gamma}_X$ is the graph given  by the vertices
	\small
	$$\mathsf V(\widehat{\Gamma}_X)=\mathsf V(\Gamma_X)\cup\{\mathsf v_{(k,i)}^{\bdr}\;|\; k=1,...\ell\mbox{ and }i=1,..,m_k\}$$
	\normalsize and the edges
	\small
	$\mathsf E(\widehat{\Gamma}_X)=\mathsf E(\Gamma_X)\cup\{\mathsf e_{(k,i)}^{\bdr}, \overline{\mathsf e}_{(k, i)}^{\bdr}\;|\;k=1,..,\ell\mbox{ and }i=1,..,m_k\}$
	\normalsize
	where $\overline{\mathsf e_{(k,i)}^{\bdr}}=\overline{\mathsf e}_{(k,i)}^{\bdr}$ and  $s(\mathsf e_{(k,i)}^{\bdr})=\mathsf v_{(k,i)}^{\bdr}$, $t(\mathsf e_{(k,i)}^{\bdr})=\mathsf v_k$. The orientation of $(\widehat{\mathscr G}_X,\widehat{\Gamma}_X)$ is:\,
	\small
	$\mathsf E_+(\widehat{\Gamma}_X)=\mathsf E_+(\Gamma_X)\cup\left\{\mathsf e_{(k,i)}^{\bdr}\;|\;k=1,...,\ell\mbox{ and }i=1,...,m_k\right\}$.\\
	\normalsize
	\item The collection of subgroups $\widehat{\mathscr G}_X$ is the union:
	\small
	$$\widehat{\mathscr G}_X=\mathscr G_X\cup\{G_{\mathsf v_{(k,i)}^{\bdr}}, G_{\mathsf e_{(k,i)}^{\bdr}}=G_{\overline{\mathsf e}_{(k,i)}^{\bdr}}\;|\; k=1,...,\ell\mbox{ and }i=1,..,m_k\}$$
	\normalsize
	where $G_{\mathsf v_{(k,i)}^{\bdr}}\cong\Z^2$,\, $ G_{\mathsf e_{(k,i)}^{\bdr}}= G_{\overline{\mathsf e}_{(k,i)}^{\bdr}}\cong \Z^2$
	and the monomorphisms 
	\small
	$$\varphi_{(k,i)}: G_{\mathsf e_{(k,i)}^{\bdr}}\f G_{\mathsf v_{(k,i)}^{\bdr}},\;\;\overline{\varphi}_{(k,i)}:G_{\mathsf e_{(k,i)}^{\bdr}}\f G_{\mathsf v_k}$$
	\normalsize
	are such that $\psi_{(k,i)}=\overline{\varphi}_{(k,i)}\circ\varphi_{(k,i)}^{-1}$ is the isomorphism induced by the  inclusion $T_{(k,i)}^{\bdr}\hookrightarrow X_k$.
	\end{itemize}
 \end{defn}
Given $\widehat{\mathsf T}_X\subseteq\widehat \Gamma_X$ a maximal subtree of $\widehat{\Gamma}_X$ we recall that is well defined the fundamental group $\pi_1(\widehat{\mathscr G}_X, \widehat{\Gamma}_X,  \widehat{\mathsf T}_X)$ of the graph of groups $(\widehat{\mathscr G}_X, \widehat{\Gamma}_X)$ at $\widehat{\mathsf T}_X$ (see \cite{bass1976remarks}, \cite{serre1977arbres} and \cite{serre1980trees} section \S I.5), which is the group generated by the vertex groups in $\widehat{\mathscr G}_X$, with their own relations, together with a set of elements $\{g_{\mathsf e}\}_{\mathsf e\in\mathsf E(\widehat{\Gamma}_X)}$ verifying the following relations:
$$g_{\mathsf e} h^{\mathsf e}g_{\mathsf e}^{-1}=h^{\overline{\mathsf e}}\,\mbox{ for each }\,h\in G_{\mathsf e}\,;\quad\quad  g_{\mathsf e}^{-1}=g_{\overline{\mathsf e}}\,;\quad\quad g_{\mathsf e}=1 \mbox{ if } \mathsf e\in\mathsf E(\widehat{\mathsf T}_X)$$
where we denote by $h^{\mathsf e}$ the image of $h\in G_{\mathsf e}$ into $G_{t(\mathsf e)}$.
It turns out (\cite{serre1980trees}) that the isomorphism class of $\pi_1(\widehat{\mathscr G}_X, \widehat{\Gamma}_X, \widehat{\mathsf T}_X)$ does not depend from the choice of a maximal tree in $\widehat{\Gamma}_X$, hence we are allowed to speak of \textit{fundamental group of $(\widehat{\mathscr G}_X, \widehat{\Gamma}_X)$}, which in this case (by construction) coincides with the fundamental group of the irreducible $3$-manifold $X$.\\

 \noindent\textit{The Bass-Serre tree of the peripheral extension of the JSJ-splitting.} We shall now describe the Bass-Serre tree of the peripheral extension of the JSJ-splitting. The tree can be constructed from the data $(\widehat{\mathscr G}_X, \widehat{\Gamma}_X, \widehat{\mathsf T}_X)$. 
 \noindent In the sequel we implicitely assume to have fixed the isomorphism $\pi_1(X)\cong\pi_1(\widehat{\mathscr G}_X, \widehat{\Gamma}_X)$, so that we shall identify $G_{\mathsf v_k}$ with $\pi_1(X_k)$ and $G_{\mathsf e_{(k,i)}^{\bdr}}$ with $\pi_1(T_{(k,i)}^{\bdr})$. 
 The Bass-Serre tree $\mathcal T_X$ (where we omitted the dependence on $(\widehat{\mathscr G}_X, \widehat{\Gamma}_X,\widehat{\mathsf T}_X)$)  is defined as follows:
\begin{itemize}
	\item $\mathsf V(\mathcal T_X)=\bigsqcup_{\mathsf v\in\mathsf V(\widehat{\Gamma}_{X})}\pi_1(X)/ G_{\mathsf v}$\,;
	\item $\mathsf E(\mathcal T_X)=\bigsqcup_{\mathsf e\in\mathsf E(\widehat{\Gamma}_X)} \pi_1(X)/ G_{\mathsf w}^{\mathsf w}$ where $\mathsf w=\overline{|\mathsf e|}$ and $G_{\mathsf w}^{\mathsf w}$ denotes the subgroup of $G_{t(\mathsf w)}$ corresponding to $\mathsf w$.
\end{itemize}
We call $\tilde{\mathsf v}$ the coset in $\pi_1(X)/G_{\mathsf v}$ containing  $1$ and $\tilde{\mathsf e}$ the coset containing $1$ in $\pi_1(X)/G_{\mathsf w}^{\mathsf w}$ (among the cosets associated to $\mathsf e$). We have to define source, target and inverse of each edge. Let $\chi$ be the characteristic function of $\mathsf E_+(\widehat{\Gamma}_X)$. Then we define: 
$s(g\,\tilde{\mathsf e})=gg_{\mathsf e}^{\chi(\mathsf e)-1}\,\widetilde{s(\mathsf e)}\;;\quad t(g\,\tilde{\mathsf e})=gg_{\mathsf e}^{\chi(\mathsf e)}\widetilde{t(\mathsf e)}\;;\quad\overline{g\,\tilde{\mathsf e}}=g\,\tilde{\overline{\mathsf e}}\;.$
\normalsize

\begin{rmk}By construction, the tree $\mathcal T_X$ verifies the following:
\begin{enumerate}
  \item the vertices $g\,G_{\mathsf v_{(k,i)}^{\bdr}}$ are the (only) leaves of the Bass-Serre tree $\mathcal T_{X}$;
  \item  $\mathsf v=g\,G_{\mathsf v_{(k,i)}^{\bdr}}$ is adjacent to $\mathsf v'=g' G_{\mathsf v_{j}}$ if and only if $j=k$ and $g^{-1}g'\in G_{\mathsf v_{k}}$.
\end{enumerate}
\end{rmk}

 The group $\pi_1(X)$ acts on the vertices and the edges of $\mathcal T_X$ by  left multiplication (as they are left cosets of subgroups of $\pi_1(X)$). This action turns out to be  an action  by automorphisms (without edge inversions) of  $\mathcal T_X$. By definition of $\mathcal T_X$ and of the action of $\pi_1(X)$ on $\mathcal T_X$ the stabilizer of the vertex  $g G_{\mathsf v}\in\mathsf V(\mathcal T_X)$  is the subgroup $\Stab_{\pi_1(X)}(g\,G_{\mathsf v})= g G_{\mathsf v}g^{-1}$. The same holds for the stabilizers of the edges. \\
 
 \textit{Glossary.} We shall distinguish four families of  vertices in $\mathcal T_X$:
\begin{itemize}
	\item  \textit{peripheral vertices}, \textit{i.e.} $h\, G_{\mathsf v_{k,i}^{\bdr}}$;
	\item  \textit{hyperbolic vertices}, \textit{i.e.} $h\, G_{\mathsf v_k}$ with $X_k$ a JSJ-component of hyperbolic type;
	\item \textit{Seifert fibered vertices}, \textit{i.e.} $h\, G_{\mathsf v_k}$ with $X_k$ a JSJ-component which is Seifert fibered with hyperbolic base orbifold;
	\item \textit{vertices of type $K\widetilde{\times}I$}, \textit{i.e.} $h\, G_{\mathsf v_k}$ with $X_k\simeq K\widetilde{\times}I$.
\end{itemize}

\subsection{The intersection of the leaves stabilizers}  We use the notation  introduced in the previous section. Before stating and proving Proposition \ref{int_leaves_stab} we need to establish two easy facts.

\begin{lem}\label{hyperbolic_vertex}
	Let $g\in\Stab_{\pi_1(X)}( hG_{\mathsf v_k})$, and assume that $\mathsf v_k$ is a hyperbolic vertex. Then $g$  can stabilize at most a single oriented edge having $h G_{\mathsf v_k}$ as source. 
\end{lem}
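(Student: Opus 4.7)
My plan is to translate the claim into a statement about intersections of conjugates of abelian peripheral subgroups of $\pi_1(X_k)$, and then apply Lemma \ref{Hyperbolic}, which is available because $X_k$ is of hyperbolic type.

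First I would recall from the construction of the Bass-Serre tree that the oriented edges of $\mathcal T_X$ with source $hG_{\mathsf v_k}$ are parametrised by pairs $(\mathsf e, h\gamma\,\varphi_{\mathsf e}(G_{\mathsf e}))$, where $\mathsf e\in\mathsf E(\widehat{\Gamma}_X)$ issues from $\mathsf v_k$ (so $\mathsf e$ is either a peripheral edge $\mathsf e_{(k,i)}^{\bdr}$ or a JSJ-edge leaving $\mathsf v_k$) and $\gamma$ runs over a system of left coset representatives of $\varphi_{\mathsf e}(G_{\mathsf e})$ in $G_{\mathsf v_k}=\pi_1(X_k)$. The stabiliser of such an edge in $\pi_1(X)$ is $h\gamma\,\varphi_{\mathsf e}(G_{\mathsf e})\,\gamma^{-1}h^{-1}$, i.e.\ a conjugate inside $h\pi_1(X_k)h^{-1}$ of an abelian peripheral subgroup of $\pi_1(X_k)$ (attached either to a boundary torus of $X$ contained in $X_k$ or to a side of a JSJ-torus adjacent to $X_k$).

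Next I would argue by contradiction: suppose that a nontrivial $g\in\Stab_{\pi_1(X)}(hG_{\mathsf v_k})=h\pi_1(X_k)h^{-1}$ stabilises two distinct oriented edges with source $hG_{\mathsf v_k}$, say the ones indexed by $(\mathsf e_1, h\gamma_1 P_1)$ and $(\mathsf e_2, h\gamma_2 P_2)$, with $P_i:=\varphi_{\mathsf e_i}(G_{\mathsf e_i})$. Setting $u:=\gamma_1^{-1}\gamma_2\in\pi_1(X_k)$, the element $h^{-1}gh$ lies in
\[
\gamma_1 P_1\gamma_1^{-1}\cap\gamma_2 P_2\gamma_2^{-1}=\gamma_1\bigl(P_1\cap u\,P_2\,u^{-1}\bigr)\gamma_1^{-1}.
\]
At this point Lemma \ref{Hyperbolic} finishes the job in two cases. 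If $\mathsf e_1\ne\mathsf e_2$, then $P_1$ and $P_2$ are the peripheral subgroups of $\pi_1(X_k)$ associated to two distinct boundary tori of $X_k$, and Lemma \ref{Hyperbolic}(ii) yields $P_1\cap u P_2 u^{-1}=\{1\}$. If instead $\mathsf e_1=\mathsf e_2$, so that $P_1=P_2=:P$, the distinctness of the two oriented edges is equivalent to $\gamma_1 P\ne\gamma_2 P$, i.e.\ $u\notin P$, and Lemma \ref{Hyperbolic}(i) gives $P\cap u P u^{-1}=\{1\}$. Either way $h^{-1}gh=1$, contradicting $g\ne 1$; the trivial case $g=1$ being implicitly excluded by the statement.

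The only real obstacle I foresee is being a bit careful to identify, via the isomorphism $\pi_1(X)\cong\pi_1(\widehat{\mathscr G}_X,\widehat{\Gamma}_X)$, the stabilisers of the edges issuing from $hG_{\mathsf v_k}$ with the appropriate conjugates of abelian peripheral subgroups of $\pi_1(X_k)$, and to keep track of the conjugations by $h$ and by the coset representatives $\gamma_i$. After that bookkeeping, the malnormality and conjugate separation of the abelian peripheral subgroups of a hyperbolic piece (Lemma \ref{Hyperbolic}) do all the work.
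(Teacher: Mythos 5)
Your proof is correct and follows essentially the same route as the paper: identify the oriented edges issuing from $hG_{\mathsf v_k}$ with left cosets of the abelian peripheral subgroups of $\pi_1(X_k)$, observe that their stabilizers are the corresponding conjugates, and conclude by the malnormality and conjugate separation of these subgroups in a hyperbolic piece (Lemma \ref{Hyperbolic}). The only difference is that you carry out the coset bookkeeping more explicitly than the paper does; the underlying argument is identical.
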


\begin{proof} Up to act on $\mathcal T_X$ by left multiplication with $h^{-1}$, we may assume that $h=1$. By construction of $\mathcal T_X$ the (oriented) edges having $G_{\mathsf v_k}=\pi_1(X_k)$ as source are either of type $b G_{\mathsf e_{(k,\ell)}^{j}}$ or $b G_{\mathsf e_{(k,i)}^{\bdr}}$, where $b\in\pi_1(X_k)$, which correspond to the left cosets in $\pi_1(X_k)$ of the peripheral subgroups corresponding to the boundary tori $T_{(k,\ell)}^{j}$, $T_{(s,k)}^r$, $T_{(k,i)}^{\bdr}$ of $X_k$. Hence the stabilizers of the edges starting from $G_{\mathsf v_k}$ corresponds to suitable conjugates $b\,\pi_1(T)\,b^{-1}<\pi_1(X_k)$ for some $T$ in $\{T_{(k,i)}^{\bdr}, T_{(s,k)}^r, T_{(k, \ell)}^j\}$  and the conclusion follows applying Lemma \ref{Hyperbolic} and conjugating by $h$.
\end{proof}

\begin{lem}\label{kappaT}
	Any geodesic path connecting two leaves of $\mathcal T_X$ and containing a vertex $h G_{\mathsf v_k}$ of type $K\widetilde{\times}I$ has length greater or equal to $4$. Moreover, a vertex of type $K\widetilde{\times}I$ is source of precisely two oriented edges, whose targets are two distinct left cosets associated to the JSJ-component adjacent to $X_k$.
\end{lem}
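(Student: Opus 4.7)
My plan is to reduce both assertions to a count of the oriented edges of $\mathcal T_X$ incident to a $K\widetilde{\times}I$-vertex. The key topological input, exploiting the standing assumption $X \not\simeq K\widetilde{\times}I$, is that a JSJ-component $X_k \simeq K\widetilde{\times}I$ has a single boundary torus which cannot lie in $\bdr X$ (otherwise $X$ would coincide with $X_k$). Consequently, in the underlying graph $\widehat{\Gamma}_X$ the vertex $\mathsf v_k$ carries no edge of type $\mathsf e_{(k,i)}^{\bdr}$, and is incident to exactly one JSJ-edge connecting it to a unique adjacent vertex $\mathsf v_j$.

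Next I count the oriented edges of $\mathcal T_X$ with source $hG_{\mathsf v_k}$. By construction of the Bass-Serre tree, these are in bijection with the cosets in $\pi_1(X_k)$ of the image of the edge group attached to the unique JSJ-edge at $\mathsf v_k$, i.e.\ with the cosets of the peripheral subgroup $\pi_1(T) = \langle a^2, f\rangle$ inside $\pi_1(K\widetilde{\times}I) = \langle a, f \mid afa^{-1}=f^{-1}\rangle$ (using the presentation associated to $S(-1,1;\,)$ as in Lemma \ref{kappatwistato}(i)). This index equals $2$, so $hG_{\mathsf v_k}$ is the source of exactly two oriented edges; their targets are vertices of the form $g\,G_{\mathsf v_j}$, and must correspond to two distinct cosets of $G_{\mathsf v_j}$, since $\mathcal T_X$ being a tree cannot support a multi-edge.

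The geodesic estimate then follows immediately: both neighbors of $hG_{\mathsf v_k}$ in $\mathcal T_X$ are non-peripheral vertices, so no leaf of $\mathcal T_X$ is adjacent to $hG_{\mathsf v_k}$, and every leaf $L$ satisfies $d(L, hG_{\mathsf v_k}) \ge 2$. For any geodesic through $hG_{\mathsf v_k}$ joining two leaves $L_1$ and $L_2$, the length therefore equals $d(L_1, hG_{\mathsf v_k}) + d(hG_{\mathsf v_k}, L_2) \ge 4$. I do not foresee a substantive obstacle: the entire content of the lemma is the index-$2$ computation combined with the observation that $K\widetilde{\times}I$ contributes no $\bdr X$-edge to $\widehat{\Gamma}_X$, and the rest is immediate from the Bass-Serre formalism together with the tree property of $\mathcal T_X$.
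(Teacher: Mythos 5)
Your proof is correct and follows essentially the same route as the paper: the index-two computation $[\pi_1(K\widetilde{\times}I):\langle a^2,f\rangle]=2$ from Lemma \ref{kappatwistato} gives the two oriented edges at a $K\widetilde{\times}I$-vertex, and the observation that no boundary torus of $X$ lies in a $K\widetilde{\times}I$ JSJ-component forces every leaf to be at distance at least $2$ from such a vertex, whence the length bound. The only cosmetic difference is that the paper exhibits the two targets explicitly as $hG_{\mathsf v_j}$ and $haG_{\mathsf v_j}$, while you deduce their distinctness from the absence of multi-edges in a tree; both are valid.
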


\begin{proof}
	A vertex  of type $K\widetilde{\times}I$ cannot be adjacent to a leaf of $\mathcal T_X$, because no boundary component of $X$ belong to a JSJ-component homeomorphic to $K\widetilde{\times}I$. This is sufficient to show that a geodesic path connecting two leaves of $\mathcal T_X$ and containing a vertex of type $K\widetilde{\times}I$ must have at least $5$ vertices, and thus must have length greater or equal to $4$.
	For what concerns the second claim observe first that the edge in $(\widehat{\Gamma}_X,\widehat{\mathscr G}_X)$ connecting $X_k$ to the adjacent JSJ-component, say $X_j$, is always contained in any maximal tree $\widehat T_X$. On the other hand, by Lemma \ref{kappatwistato} the peripheral subgroup $\pi_1(\bdr X_k)$ has index two in $\pi_1(X_k)$ and that the two left cosets of $\pi_1(\bdr X_k)$ in $\pi_1(X_k)$, using the notation of Lemma \ref{kappatwistato}, are $\pi_1(\bdr X_k)$ and $a\,\pi_1(\bdr X_k)$. Combining the previous informations we deduce that we have  precisely two oriented edges starting from $h\,\pi_1(X_k)$ ---\,namely $h\,\pi_1(\bdr X_k)= h\, G_{\mathsf v_{(k,j)}^{1}}$ and $ha\,\pi_1(\bdr X_k)=ha\,G_{\mathsf v_{(k,j)}^{1}}$--- whose targets are $h\, G_{\mathsf v_j}$ and $ha\,G_{\mathsf v_j}$ respectively. 
\end{proof}

We are now ready to prove:

\begin{prop}[Intersection of leaves stabilizers]\label{int_leaves_stab}
	 Let $X$ be an irreducible $3$-manifold with non-empty boundary, not homeomorphic to $K\widetilde{\times}I$, $T^2\times I$ or $D^2\times S^1$. Consider two distinct leaves $h_1 G_{\mathsf v_{(k_1,i_1)}^{\partial}}$, $h_2 G_{\mathsf v_{(k_2,i_2)}^{\partial}}$ of $\mathcal T_X$. Then
	\small
	$$\Stab_{\pi_1(X)}(h_1G_{\mathsf v_{(k_1,i_1)}^{\bdr}})\cap\Stab_{\pi_1(X)}(h_2 G_{\mathsf v_{(k_2, i_2)}^{\bdr}})\neq\{1\}$$
	\normalsize
	if and only if the following conditions hold:
	\begin{itemize}
		\item[(i)] $k=k_1=k_2$\,;
		\item[(ii)] $X_k$ is a Seifert fibered JSJ-component not homeomorphic to $K\widetilde{\times}I$\,;
		\item[(iii)] $h_1^{-1}h_2\in G_{\mathsf v_k}$\,.
	\end{itemize}
		Moreover, if  $hG_{\mathsf v_k}$ is the vertex of $\mathcal T_{X}$ adjacent to $h_{j} G_{\mathsf v_{(k,i_j)}^{\bdr}}$ ($j=1,2$) the previous intersection coincides with the infinite cyclic subgroup $h\langle f_k \rangle h^{-1}$, where $f_k$ is the regular fiber of $G_{\mathsf v_k}\cong\pi_1(X_k)$.\\
	\end{prop}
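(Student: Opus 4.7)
The plan rests on the standard Bass-Serre observation that an element of $\pi_1(X)$ acting on the tree $\mathcal T_X$ (without inversions) and fixing two vertices automatically fixes, pointwise, the unique geodesic between them. Accordingly, the intersection in question coincides with $\bigcap_{\mathsf w \in \gamma} \Stab_{\pi_1(X)}(\mathsf w)$, where $\gamma$ is the geodesic from $\mathsf v_1 := h_1 G_{\mathsf v_{(k_1,i_1)}^{\bdr}}$ to $\mathsf v_2 := h_2 G_{\mathsf v_{(k_2,i_2)}^{\bdr}}$. Write $\gamma = (\mathsf v_1 = u_0, u_1, \dots, u_{\ell-1}, u_\ell = \mathsf v_2)$ with $\ell \geq 2$; by the adjacency rule recalled in the remark after the definition of $\mathcal T_X$, one has $u_1 = h_1 G_{\mathsf v_{k_1}}$ and $u_{\ell-1} = h_2 G_{\mathsf v_{k_2}}$. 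Whenever the intersection is nontrivial, Lemma~\ref{hyperbolic_vertex} prevents $u_1$ and $u_{\ell-1}$ from being hyperbolic vertices (the stabilizer would fix at most one oriented adjacent edge, contradicting the presence of two distinct ones on $\gamma$), and Lemma~\ref{kappaT} rules out type $K\widetilde{\times}I$ (never adjacent to a leaf); both must therefore be Seifert fibered with hyperbolic base orbifold.

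For $\ell = 2$, the unique interior vertex $u_1$ is adjacent to both leaves, forcing $k_1 = k_2 =: k$ and $h_1^{-1}h_2 \in G_{\mathsf v_k}$, i.e.\ conclusions (i) and (iii); (ii) has just been established. Writing $u_1 = h\,G_{\mathsf v_k}$ and $h_j = h\,g_j$ with $g_j \in \pi_1(X_k)$, the intersection of the two leaf stabilizers conjugates by $h^{-1}$ to
\[
g_1\,\pi_1(T^{\bdr}_{(k,i_1)})\,g_1^{-1}\;\cap\;g_2\,\pi_1(T^{\bdr}_{(k,i_2)})\,g_2^{-1},
\]
which equals $\langle f_k \rangle$ by Lemma~\ref{Seifert}(ii) when $i_1 \neq i_2$ and by Lemma~\ref{Seifert}(i) when $i_1 = i_2$ (in the latter case $g_1^{-1}g_2 \notin \pi_1(T^{\bdr}_{(k,i_1)})$ since the leaves are distinct). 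The centrality of $f_k$ in $\pi_1(X_k)$ makes $h\langle f_k\rangle h^{-1}$ independent of the chosen representative $h$, yielding the \emph{moreover} clause.

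For $\ell \geq 3$ the plan is to derive $g = 1$ and so rule out this case. At $u_1$, the element $g$ fixes two distinct adjacent edges of $\gamma$: the peripheral one to $\mathsf v_1$ and the JSJ-edge to $u_2$, corresponding to two distinct boundary tori of $X_{k_1}$ (a peripheral torus and a JSJ-torus). Lemma~\ref{Seifert}(ii) then gives $h_1^{-1} g h_1 \in \langle f_{k_1}\rangle$, so $g = h_1 f_{k_1}^n h_1^{-1}$ for some $n \in \Z$. If $u_2$ is hyperbolic, Lemma~\ref{hyperbolic_vertex} immediately forces $g = 1$. If $u_2$ is Seifert fibered with hyperbolic base, the same argument at $u_2$ (via Lemma~\ref{Seifert}) traps $(h^{(2)})^{-1} g\, h^{(2)} \in \langle f_{k^{(2)}} \rangle$; since $g$ lies in the edge-stabilizer $\pi_1(T)$ of the JSJ-edge $u_1 u_2$, identified on both sides via the gluing, Lemma~\ref{Gluingiso} --- applicable by JSJ-minimality --- yields $\langle f_{k_1}\rangle \cap \langle f_{k^{(2)}}\rangle = \{1\}$ inside $\pi_1(T)$, whence $g = 1$.

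The main obstacle is the remaining case, when $u_2$ is of type $K\widetilde{\times}I$. Because such a component has a single boundary torus and thus a unique JSJ-neighbor, $u_3$ cannot itself be of type $K\widetilde{\times}I$ (otherwise the JSJ-subgraph through $u_2$ would be an isolated edge, forcing the corresponding submanifold to be closed and contradicting $\bdr X \neq \varnothing$ together with connectedness of $X$). By Lemma~\ref{kappaT}, $u_3 = h_3 G_{\mathsf v_{k_1}}$, a second coset of the same JSJ-component as $u_1$; on the $K\widetilde{\times}I$ side the two edges $u_2 u_1$ and $u_2 u_3$ represent cosets of $\pi_1(\bdr(K\widetilde\times I))$ differing by left multiplication by the generator $a$ from Lemma~\ref{kappatwistato}. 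If $u_3$ is hyperbolic, Lemma~\ref{hyperbolic_vertex} again forces $g=1$; otherwise $u_3$ is Seifert with hyperbolic base and the previous argument applied at $u_3$ gives $g = h_3 f_{k_1}^m h_3^{-1}$. Comparing the two expressions for $(h^{(2)})^{-1} g\, h^{(2)}$ inside $\pi_1(\bdr(K\widetilde\times I)) = \langle a^2, f\rangle$ yields $(a^{2p}f^q)^n$ on one side --- where $a^{2p}f^q$ is the gluing image of $f_{k_1}$, with $p, q \neq 0$ by Lemma~\ref{Gluingiso} --- and, via $a f a^{-1} = f^{-1}$ (Lemma~\ref{kappatwistato}), $(a^{2p}f^{-q})^m$ on the other. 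Equating these in $\Z^2$ forces $n = m = 0$, hence $g = 1$, completing the proof.
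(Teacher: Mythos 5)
Your proof is correct and follows essentially the same route as the paper's: it walks along the geodesic joining the two leaves in $\mathcal T_X$ and uses Lemmata \ref{hyperbolic_vertex}, \ref{kappaT}, \ref{Seifert} and \ref{Gluingiso} to force its length to be $2$, with your explicit $\Z^2$ computation in the $K\widetilde{\times}I$ case being precisely Lemma \ref{kappatwistato}(iii) as invoked in the paper. One small correction: $f_k$ need not be \emph{central} in $\pi_1(X_k)$ (when the base orbifold is non-orientable one has $a_i f a_i^{-1}=f^{-1}$); the well-definedness of $h\langle f_k\rangle h^{-1}$ only requires normality of $\langle f_k\rangle$, which does hold.
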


\begin{proof}
We may assume that the JSJ-decomposition of $X$ is non-trivial, otherwise the result follows trivially from the description of the Bass-Serre tree,  Lemma \ref{Seifert} and Lemma \ref{Hyperbolic}.
 Let $g\in\pi_1(X)$ be a non-trivial element and assume that
	\small
	$$g\in\Stab_{\pi_1(X)}(h_1G_{\mathsf v_{(k_1, i_1)}^{\bdr}})\cap\Stab_{\pi_1(X)}(h_2 G_{\mathsf v_{(k_2,i_2)}^{\bdr}})$$
	\normalsize
Since the fixed point set of an element of a group acting on a tree is a connected subtree, we deduce that $\fix_{\mathcal T_X}(g)$ contains the geodesic segment $\gamma:[0,M]\f\mathcal T_X$ joining the vertex $h_1 G_{\mathsf v_{(k_1,i_1)}^{\bdr}}$ and the vertex $h_2 G_{\mathsf v_{(k_2,i_2)}^{\bdr}}$. Here we have chosen the \textit{simplicial distance} on $\mathcal T_X$ --- where the edges of $\mathcal T_X$ are all isometric to $[0,1]\subset\mathbb R$ --- so that $\gamma(i)$ is a vertex of $\mathcal T_X$ for each integer $i=0,..., M$.
By construction of $\mathcal T_X$ the path between two distinct leaves necessarily has length greater or equal to $2$, hence  $M\ge 2$. 
We recall that the vertices $hG_{\mathsf v_{(k,i)}^{\bdr}}$ are leaves of the Bass-Serre tree $\mathcal T_X$.\linebreak It follows that the vertices contained into $\gamma((0,M))$ are necessarily of  type $h G_{\mathsf v_k}$ for a suitable $h\in \pi_1(X)$, because a geodesic segment cannot contain backtrackings.
We know that the interior of the path $\gamma$ does  contain neither hyperbolic vertices (by Lemma \ref{hyperbolic_vertex}), nor peripheral vertices. Now we shall prove that the length of the geodesic $\gamma$ is necessarily equal to $2$.

\begin{lem}\label{M=2} $M=2$
	\end{lem}

\begin{proof} Let $\gamma(i)=g_iG_{\mathsf v_{k_i}}$ be the vertices in  $\gamma((0,M))$.
We  know that the vertices $\gamma(i)$, $i\in\{1,..,M-1\}$ are either Seifert fibered vertices or vertices of type $K\widetilde{\times}I$.
Consider $\gamma(1)=g_1 G_{\mathsf v_{k_1}}$. By Lemma \ref{kappaT} we know that this vertex is a Seifert fibered vertex. Since $g$ stabilizes $\gamma$, it should stabilize two distinct (oriented) edges having $\gamma(1)$ as source and $\gamma(0)$, $\gamma(2)$ as target. By Lemma \ref{Seifert} we know that in this case
$g\in g_1\langle f_{k_1}\rangle g_1^{-1}<\Stab_{\pi_1(X)}(g_1G_{\mathsf v_{k_1}})$
 where $f_{k_1}$ is the regular fiber of $\pi_1(X_{k_1})$. We deduce  that either $\gamma(2)$ is a peripheral vertex or  by  Lemma \ref{Gluingiso} $g\in \Stab_{\pi_1(X)}(g_2 G_{\mathsf v_{k_2}})\smallsetminus \langle g_2 f_{k_2} g_2^{-1}\rangle$, for any choice of a regular fiber $f_{k_2}$ in $\pi_1(X_{k_2})$. Our aim is to show that $\gamma(2)$ is a peripheral vertex. Thus we need to prove that $\gamma(2)$ is neither a Seifert fibered vertex nor a vertex of type $K\widetilde{\times}I$.
 
 Assume first that $\gamma(2)$ is a Seifert fibered vertex. Since $g\not\in g_2\langle f_{k_2}\rangle g_2^{-1}$ it follows from Lemma \ref{Seifert} that
 $g$ does not stabilize edges other than the one connecting $g_1 G_{\mathsf v_{k_1}}$ to $g_2 G_{\mathsf v_{k_2}}$. In particular $g$ would not stabilize any geodesic path of length greater or equal to $3$. Since we are assuming the length of the geodesic path greater or equal to $3$, we need to exclude that $\gamma(2)$ is a Seifert fibered vertex.

Now consider the case where  $\gamma(2)$ is a vertex of type $K\widetilde{\times} I$. By Lemma \ref{kappaT} in this case $M\ge 4$ and  there are only two oriented edges starting from $g_2 G_{\mathsf v_{k_2}}$, which  are  those linking $\gamma(2)$ to the vertices $\gamma(1)=g_1 G_{\mathsf v_{k_1}}$ and $\gamma(3)=g_3G_{\mathsf v_{k_3}}=g_3 G_{\mathsf v_{k_1}}$ 
 and from Lemma \ref{kappaT} we deduce that $g_3\, G_{\mathsf v_{k_1}}=g_2a\, G_{\mathsf v_{k_1}}$. Since $g\in g_1\langle f_{k_1}\rangle g_1^{-1}$ we have  $g\in g_2\langle\psi_{(k_1,k_2;1)}^{\varepsilon}(f_{k_1})\rangle g_2^{-1}$, where we have to choose $\varepsilon\in\{\pm1\}$ depending on the orientation of the edge. By Lemma \ref{Gluingiso} we know that $\psi_{(k_1,k_2;1)}^{\varepsilon}(f_{k_1})\not\in\langle a^2\rangle\cup\langle f\rangle$. 
 We can write $g$  as:
 \small
$$g=g_2 \psi_{(k_1, k_2;1)}^{\varepsilon}(f_{k_1}^{m})g_2^{-1}=(g_2a) (a^{-1}\psi_{(k_1, k_2;1)}^{\varepsilon}(f_{k_1}^{m}) a)\, (g_2a)^{-1}$$
\normalsize
Since $\psi_{(k_1, k_2;1)}^{\varepsilon}(f_{k_1})\not\in\langle a^2\rangle\cup\langle f\rangle$, it follows from Lemma \ref{kappatwistato} that  
\small
$$\langle a^{-1}\psi_{(k_1, k_2;1)}^{\varepsilon}(f_{k_1}) a\rangle\cap\langle\psi_{(k_1, k_2;1)}^{\varepsilon}(f_{k_1})\rangle=\{1\}$$
\normalsize
 Thus, looking at $g$ as an element of $\Stab_{\pi_1(X)}(g_3 G_{\mathsf v_{k_1}})=\Stab_{\pi_1(X)}(g_2a\, G_{\mathsf v_{k_1}})$, we obtain the expression  $g=(g_2a\,)\psi_{(k_1,k_2;1)}^{-\varepsilon}\left(a^{-1}\psi_{(k_1,k_2; 1)}^{\varepsilon} (f_{k_1}^m) a\right)\,(g_2a)^{-1}$. From the previous discussion it follows that $g\not\in (g_2a)\langle f_{k_1}\rangle (g_2a)^{-1}$ and hence  $g\not\in g_3\langle f_{k_1}\rangle g_3^{-1}$ (as $g_3^{-1}g_2a\in G_{\mathsf v_{k_1}}$ and $\langle f_{k_1}\rangle$ is normal in $ G_{\mathsf v_{k_1}}$). Using Lemma \ref{Seifert} we conclude that $g$ does not stabilize the edges starting from $\gamma(3)$ except for the one connecting $\gamma(3)$ to $\gamma(2)$. But this is a contradiction, since by Lemma \ref{kappaT} $g$ should stabilize a segment $\gamma$ of length greater or equal to $4$.
 
 Since $\gamma(2)$ is neither a hyperbolic vertex, nor a Seifert fibered vertex, nor a vertex of type $K\widetilde{\times}I$ we conclude that $\gamma(2)$ is a peripheral vertex, \textit{i.e.} $M=2$ and $\gamma(2)=h_2 G_{\mathsf v_{(k_2,i_2)}^{\bdr}}$.
\end{proof}

\noindent \textit{End of the Proof of Proposition \ref{int_leaves_stab}.} By  Lemma \ref{M=2}, we know that if  
\small
$$\Stab_{\pi_1(X)}(h_1 G_{\mathsf v_{(k_1,i_1)}^{\partial}})\cap\Stab_{\pi_1(X)}(h_2 G_{\mathsf v_{(k_2, i_2)}^{\bdr}})\neq\{1\}$$
\normalsize
then the path connecting $h_1 G_{\mathsf v_{(k_1, i_1)}^{\partial}}$ to $h_2 G_{\mathsf v_{(k_2,i_2)}^{\partial}}$ has length equal to $2$. This means that both the leaves are adjacent to a single vertex $h G_{\mathsf v_k}$, which is necessarily a Seifert fibered vertex by Lemma \ref{hyperbolic_vertex} and Lemma \ref{kappaT}. By construction of $\mathcal T_X$ we deduce that $k_1=k_2=k$, and  $h^{-1}h_i\in G_{\mathsf v_{k}}$ for $i=1, 2$, and thus $h_1^{-1}h_2\in G_{\mathsf v_k}$.\\ Now assume that $i_1\neq i_2$. The stabilizers of the leaves are precisely \small $$h_{j} G_{\mathsf v_{(k,i_j)}^{\partial}} h_{j}^{-1}=h_j\pi_1(T_{(k,i_j)}^{\bdr})h_j^{-1}\quad\quad j=1,2$$ \normalsize  Conjugating both subgroups by  $h_1$, the problem is reduced to find the intersection of the subgroups $\pi_1(T_{(k, i_1)}^{\partial})$ and $(h_1^{-1}h_2)\,\pi_1(T_{(k, i_2)}^{\partial})\,(h_2^{-1}h_1)$ in $\pi_1(X_k)$. It follows from Lemma \ref{Seifert} that this intersection is equal to the subgroup $\langle f_k\rangle$. The same conclusion holds (again in view of Lemma \ref{Seifert}) if $i=i_1=i_2$ and $h_2^{-1}h_1\not\in\pi_1(T_{(k,i)}^{\partial})$.
\end{proof}

\begin{proof}[Proof of Theorem \ref{AI_bdrtori} and  of \cite{delaHarpe2014onmalnormal} Theorem 3]
	Since $h_j G_{\mathsf v_{(k_j, i_j)}^{\partial}}$ are two distinct leaves of $\mathcal T_X$, then  either \small $(k_1,i_1)\neq(k_2, i_2)$\normalsize, or  \small$(k,i)=(k_1,i_1)=(k_2,i_2)$\normalsize\, but $h_1^{-1}h_2\not\in G_{\mathsf v_{(k,i)}^{\partial}}$.
	Theorem \ref{AI_bdrtori} (i) is equivalent to  Proposition \ref{int_leaves_stab} case $ (k,i)=(k_1,i_1)=(k_2,i_2)$ and $h_1^{-1}h_2\not\in\pi_1(T_{(k,i)}^{\partial})$, whereas Theorem \ref{AI_bdrtori} (ii) corresponds to Proposition \ref{int_leaves_stab} case $(k_1,i_1)\neq(k_2,i_2)$.  Proposition \ref{int_leaves_stab} (ii) is precisely the content of \cite{delaHarpe2014onmalnormal} Theorem 3. Finally, we observe that Proposition \ref{int_leaves_stab} implies that abelian peripheral subgroups corresponding to the boundary tori which belong to distinct JSJ-components are conjugately separated.
	\end{proof}

\section{Malnormal splittings of $3$-manifold groups}

\subsection{Malnormal amalgamated products and HNN-extensions}

\subsubsection{Definitions}

In this section we shall first recall the definition of \textit{$k$-step malnormal amalgamated product} (see \cite{karras1971malnormal}) and we shall give the natural analogous, the notion of \textit{$k$-step malnormal HNN-extension}. We shall recall their relation with the notion of $k$-\textit{acylindrical splitting}  (\cite{sela1997acylindrical}) in the  setting of Bass-Serre theory. For a general introduction to combinatorial group theory we refer to the textbooks \cite{lyndon1977combinatorial}, \cite{magnus2004combinatorial}; see also section \S1 in \cite{scott1979topological}.\\

\noindent Let $G$ be a group. Let $H<G$ be a subgroup of $G$. We recall that $H$ is \textit{malnormal} in $G$ if $gHg^{-1}\cap H\neq\{1\}$ implies $g\in H$. 

\begin{defn}[Extended normalizer]
	Let $H$ be a subgroup of a discrete group $G$ and let $h\in H$. The \textit{extended normalizer} $\mathcal E_G^H(h)$ of $h$ relative to $H$ in $G$ is the set: ${\mathcal E}_G^H(h)=\{g\in G\,|\; ghg^{-1}\in H\}$, when $h\neq 1$, and $\mathcal E_G^H(1)= H$. We define the extended normalizer of $H$ in $G$ by $\mathcal E_G(H)=\bigcup_{h\in H} \mathcal E_G^H(h)$.
\end{defn}

\noindent In \cite{karras1971malnormal} Karrass and Solitar introduced the notion of $k$-step malnormal amalgamated product (or, for short, $k$-step malnormal product) combining the notion of malnormality of a subgroup and the existence of a \textit{normal form} for the elements of an amalgamated product (and thus of a notion of \lq\lq length").
Let us denote the \textit{syllable length} of the element $g$ by $|\,g\,|$. If $K$ is a subset of $G$ we shall denote $\mathscr L(K)=\max_{g\in K} |\,g\,|$.

\begin{defn}[$k$-step malnormal product]
	Let $G\cong A\,_C^*B$ be an amalgamated product. The subgroup $C$ is $k$-step malnormal in $G$ if and only if $\mathscr L(\mathcal E_G(C))\le k$. We shall say that $G$ is a \textit{$k$-step malnormal product}. It is easily seen that $G$ is a $0$-step malnormal product if and only if $C$ is malnormal in both $A$ and $B$.
\end{defn}





\noindent The HNN-extension of a group $A$ via an isomorphism $\varphi: C_{-1}\f C_{1}$ between two of its subgroups, $C_{-1}$, $C_{1}$, is defined as the group having the following presentation:
\small
$$A\,^\ast_{\varphi}=\langle A, t\;|\; \mbox{rel}(A);\, tc t^{-1}=\varphi(c), \forall c\in C_{-1}\rangle$$
\normalsize 
\noindent where  $\mathrm{rel}(A)$ are the relations  of the group $A$. We remark that any element $g$ in $G\cong A\,^\ast_\varphi$ can be written as:
$g=w_0\,t^{\varepsilon_1} w_1\, t^{\varepsilon_2}\cdots t^{\varepsilon_n} \,w_n$
where $\varepsilon_i=\pm 1$ and $w_i\in A$.
\vspace{2mm}

\noindent \textbf{Britton's Lemma}.\emph{
	Let $G\cong A\,^\ast_\varphi$ and let $G\ni g= w_0\,t^{\varepsilon_1}\,w_1\cdots t^{\varepsilon_n}\,w_n$, as before. If  $g = 1$ one of the two following conditions holds:
	\begin{itemize}
		\item[(i)] $n=0$ and $w_0= 1$\,;
		\item[(ii)] $g$ contains either $t w_i t^{-1}$ with $w_i\in C_{-1}$ or $t^{-1} w_i t$ with $w_i\in C_{1}$. For future reference we shall say that such a form has pinches.
	\end{itemize}
}

\noindent We shall say that $g=w_0\,t^{\varepsilon_1} w_1\, t^{\varepsilon_2}\cdots t^{\varepsilon_n} \,w_n$ is a reduced form for $g$ if it contains no pinches. A consequence of  Britton's Lemma is that any element of $G= A\,^\ast_\varphi$ has a reduced form. The number of occurrences of the stable letter $t$, raised to the power $\pm 1$ in a reduced form for $g$ does not change if we change the reduced form. Hence we define the length of the element $g$ as  the number $|\,g\,|$ of occurrences of the stable letter raised to the power $\pm 1$  in a reduced writing for $g$.

\begin{defn}
	Let us consider $A\,_\varphi^*$ where $\varphi :C_{-1}\f C_{1}$ is the isomorphism. We shall say that $G\cong A\,^\ast_{\varphi}$ is a \textit{$k$-step malnormal HNN-extension} if $g\,C_{\varepsilon}\,g^{-1}\cap C_{\pm\varepsilon}\neq 1$ for $\varepsilon=\pm 1$, implies $|\,g\,|\le k$.
\end{defn}

\subsubsection{Malnormality and acilindricity}
\noindent Let $G$ be the fundamental group of the  graph of groups $(\mathscr G,\Gamma)$  defined as follows: the underlying graph has two vertices $\{\mathsf v_{A}, \mathsf v_{B}\}$ and two (oriented) edges $\{\mathsf e_{C}, \overline{\mathsf e}_C\}$, with $\overline{\mathsf e_C}=\overline{\mathsf e}_C$, $s(\mathsf e_C)=\mathsf v_A$ and $t(\mathsf e_C)=\mathsf v_B$; the collection of vertex groups is given by $A\cong G_{\mathsf v_A}$, $B\cong G_{\mathsf v_B}$ and the edge group is $C\cong G_{\mathsf e_C}\cong G_{\overline{\mathsf e}_C}$ with monomorphisms given by $\iota_A: C\hookrightarrow A$ and $\iota_B:C\hookrightarrow B$.  Hence $G\cong\pi_1(\mathscr G,\Gamma)\cong A\,_C^*B$.

\begin{prop}
	The splitting $(\mathscr G, \Gamma)$ is $k$-acylindrical if and only if $\pi_1(\mathscr G,\Gamma)\cong A\,_C^*B$ is a $(k-1)$-step malnormal product.
\end{prop}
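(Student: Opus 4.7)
The plan is to translate the statement into Bass-Serre geometry and read off the equivalence from the dictionary between syllable length in $A\,_C^*B$ and combinatorial distance in the tree. The Bass-Serre tree $\mathcal T$ of $(\mathscr G,\Gamma)$ has vertex set $G/A\sqcup G/B$, edge set $G/C$, and stabilizers $\Stab(hA)=hAh^{-1}$, $\Stab(hB)=hBh^{-1}$, $\Stab(hC)=hCh^{-1}$. Because $(\mathscr G,\Gamma)$ admits no edge inversions, any element that fixes an edge of $\mathcal T$ also fixes its two endpoints, so $\fix_{\mathcal T}(g)$ is an honest subtree whose diameter equals the maximal number of edges of a geodesic it contains. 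If $g\neq 1$ is not conjugate into $C$ then $\fix_{\mathcal T}(g)$ contains no edge, hence at most one vertex, and the $k$-acylindricity condition is trivial for such $g$. So, up to acting by $G$ on $\mathcal T$, one may restrict attention to elements of the form $g=c\in C\smallsetminus\{1\}$.

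The key geometric input is the following \emph{length lemma}: for every $h\in G$, the unique geodesic in $\mathcal T$ joining the edges $C$ and $hC$ consists of exactly $|h|+1$ edges, where $|h|$ denotes the syllable length of $h$ in $A\,_C^*B$. I would prove this by induction on $|h|$ using the normal form for amalgamated products: writing a reduced expression $h=s_1 s_2\cdots s_n$ with the $s_i$'s alternating between $A\smallsetminus C$ and $B\smallsetminus C$, the sequence of edges $C,\, s_1 C,\, s_1 s_2 C,\, \ldots,\, s_1\cdots s_n C=hC$ is non-backtracking, since each pair of consecutive edges shares the vertex $s_1\cdots s_i A$ or $s_1\cdots s_i B$ while the alternation of the $s_i$'s prevents a three-term common vertex; hence this sequence realizes the tree geodesic, and it has $n+1$ edges. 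This is the main technical step, but it is a routine consequence of the normal form theorem.

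To conclude, fix $c\in C\smallsetminus\{1\}$. An edge $kC$ lies in $\fix_{\mathcal T}(c)$ precisely when $k^{-1}ck\in C$, that is, when $k\in\mathcal E_G^C(c)^{-1}$. Applying the length lemma with $k_1=1$ and letting $k_2$ range over $\mathcal E_G^C(c)^{-1}$ gives $\diam(\fix_{\mathcal T}(c))\ge 1+\mathscr L(\mathcal E_G^C(c))$. Conversely, for two fixed edges $k_1 C$, $k_2 C$ the identity $(k_1^{-1}k_2)^{-1}\,(k_1^{-1}ck_1)\,(k_1^{-1}k_2)=k_2^{-1}ck_2\in C$ shows that $k_1^{-1}k_2$ lies in $\mathcal E_G^C(k_1^{-1}ck_1)^{-1}\subseteq\mathcal E_G(C)$, so its syllable length is at most $\mathscr L(\mathcal E_G(C))$; by the length lemma, the geodesic between $k_1 C$ and $k_2 C$ has at most $1+\mathscr L(\mathcal E_G(C))$ edges. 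Combining these bounds with $\mathscr L(C)=0$, one obtains
\[
\sup_{g\in G\smallsetminus\{1\}}\diam\bigl(\fix_{\mathcal T}(g)\bigr)\;=\;1+\mathscr L(\mathcal E_G(C)).
\]
The equivalence in the proposition is now immediate: $(\mathscr G,\Gamma)$ is $k$-acylindrical if and only if the left-hand side is $\le k$, that is, if and only if $\mathscr L(\mathcal E_G(C))\le k-1$, which is by definition the condition that $A\,_C^*B$ be a $(k-1)$-step malnormal product.
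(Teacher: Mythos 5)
Your argument is correct and follows essentially the same route as the paper: both translate acylindricity into a statement about stabilizers in the Bass--Serre tree of the one-edge splitting and use the normal form for $A\,_C^*B$ to identify tree distance with syllable length, so that non-trivial intersections of edge (or vertex) stabilizers correspond exactly to elements of the extended normalizer $\mathcal E_G(C)$. Your edge-based bookkeeping --- reducing to $c\in C\smallsetminus\{1\}$ and proving the exact identity $\sup_{g\neq 1}\diam(\fix_{\mathcal T}(g))=1+\mathscr L(\mathcal E_G(C))$ --- is a slightly cleaner packaging of the vertex-stabilizer computation carried out in the paper, but not a different method.
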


\begin{proof} Consider the Bass-Serre tree  $\mathcal T_{(\mathscr G, \Gamma)}$ associated to the graph of groups previously described and to the choice of $\{\mathsf e_C\}$ as orientation (no choice of a maximal tree is needed in this case, since $\Gamma$ is a tree). 
We are assuming here to have chosen the representative of the left coset $hA$ (resp. $hB$) so that $|ha|>|h|$ for any $a\in A\smallsetminus C$ (resp. $|hb|>|h|$ for any $b\in B\smallsetminus C$). By construction two vertices $gA$ and $hB$ are adjacent if and only if  either there exists $b\in B\smallsetminus C$ such that $g=hb$ or there exists $a\in A\smallsetminus C$ such that $h=ga$ or $g=h=1$. 

\noindent Consider the action of $G\cong A\,^*_CB$ on $\mathcal T_{(\mathscr G, \Gamma)}$. The stabilizers of the vertices are given by 
$\Stab_{G}(gA)\cong gAg^{-1}$ and $\Stab_{G}(hB)\cong h Bh^{-1}$. Moreover the edge $\mathsf e$ between $gA$ and $hB$  is equal to $\mathsf e=\delta \iota_A(C)$ where $\delta$ is the longest element between $g$ and $h$. The stabilizer is $\Stab_{G}(\mathsf e)\cong\delta\,\iota_A(C)\, \delta^{-1}$. Let $g\in G$ be an element which stabilizes a set in $\mathcal T_{(\mathscr G, \Gamma)}$ having diameter  equal to $k'\in\N$. Consider a geodesic of length $k'$ between the two vertices $\mathsf v_1,\mathsf v_2$ realizing the diameter of  $\fix_{\mathcal T_{(\mathscr G, \Gamma)}}(g)$, the fixed point set of $g$. We shall assume that $\mathsf v_1=h_1A$ and $\mathsf v_2= h_2B$ (the other cases are similar).  Since the element $g$ stabilizes both $\mathsf v_1$ and $\mathsf v_2$ we have that $g\in \Stab_{G}(\mathsf v_1)\cap \Stab_{G}(\mathsf v_2)$. From the structure of $\mathcal T_{(\mathscr G,\Gamma)}$ and the previous choice of a representative system for the left cosets of $A$ and $B$, it is straightforward to check that $k'$ is either equal to $|h_1^{-1}h_2|+1$ or to $|h_1^{-1} h_2|$ (depending whether the path which leads from $1\, A$ to $(h_1^{-1} h_2)\, B$ pass through the edge $1\, C$ or no). Observe that
\small
$$g\in\Stab_{G}(\mathsf v_1)\cap \Stab_{G}(\mathsf v_2) \Leftrightarrow g\in h_1Ah_1^{-1}\cap h_2Bh_2^{-1}\Leftrightarrow g\in h_1 Ch_1^{-1}\cap h_2 Ch_2^{-1}$$
\normalsize
Assume that $G\cong A\,_C^*B$ is a $k$-step malnormal product; the latter intersection is non-trivial only if $k'-1\le|h_1^{-1}h_2|\le k$. A similar conclusion holds if we replace $h_2B$ with a vertex of type $hA$ or $h_1 A$ with a vertex of type $hB$. This implies that the action of $G$ on $\mathcal T_{(\mathscr G, \Gamma)}$ is $(k+1)$-acylindrical, \textit{i.e.} the graph of groups $(\mathscr G,\Gamma)$ is a $(k+1)$-acylindrical splitting of $G$.
 Viceversa, assume that the graph of groups $(\mathscr G,\Gamma)$ is a $(k+1)$-acylindrical splitting for $G\cong A\,_C^*B$. The intersection of the two stabilizers is non-trivial only if $|h_1^{-1}h_2|+1\le k+1$ \textit{i.e.} only if $C$ is $k$-step malnormal in $G \cong A\,_C^*B$.
 \end{proof}

Now let us consider an amalgamated product $G=A\,_\varphi^*$ where $\varphi: C_{-1}\f C_1$ is an isomorphism between two subgroups of $A$. We can construct the following graph of groups $(\mathscr G,\Gamma)$: consider a loop formed by a vertex $\mathsf v$ and one edge $\mathsf e$ and define $G_{\mathsf v}=A$ and $C=G_{\mathsf e}=G_{\bar{\mathsf e}}$ with isomorphisms $G_{\mathsf e}\f C_{-1}<A$, $G_{\bar{\mathsf e}}\f C_1<A$.

\begin{prop}
	The splitting $(\mathscr G,\Gamma)$ is $k$-acylindrical if and only if $\pi_1(\mathscr G,\Gamma)=A\,_\varphi^*$ is a $k$-step malnormal HNN-extension.
\end{prop}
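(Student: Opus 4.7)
The plan is to mirror the preceding amalgamated-product argument, with the modifications needed to pass from $A\,_C^*B$ to the HNN-extension $A\,_\varphi^*$. First I would describe the Bass-Serre tree $\mathcal T_{(\mathscr G,\Gamma)}$ explicitly: the underlying graph is a loop on a single vertex $\mathsf v$ and one edge $\mathsf e$ with $s(\mathsf e)=t(\mathsf e)=\mathsf v$, so the vertex set of $\mathcal T_{(\mathscr G,\Gamma)}$ is $G/A$ and, taking $\mathsf e\in E_+$, its oriented edges are parametrised by $G/C_{-1}$ in such a way that the coset $gC_{-1}$ corresponds to the edge with source $gA$ and target $gtA$. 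Vertex stabilisers are $\Stab_G(gA)=gAg^{-1}$; a direct application of Britton's lemma to the condition ``$c\in A$ and $t^{-1}ct\in A$'' identifies the stabiliser of the base edge $A\to tA$ with $C_1$ (and, symmetrically, that of $A\to t^{-1}A$ with $C_{-1}$), so the stabiliser of the $i$-th edge of a geodesic is $v_{i-1}C_{\varepsilon_i}v_{i-1}^{-1}$ for appropriate signs $\varepsilon_i$.

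The second preliminary is the identification of tree distance with syllable length: if $g=w_0 t^{\varepsilon_1}w_1\cdots t^{\varepsilon_n}w_n$ is a reduced form, Britton's lemma prevents any backtrack along the path $v_i=w_0 t^{\varepsilon_1}\cdots t^{\varepsilon_i}A$, so this path is geodesic and $d(hA,h'A)=|h^{-1}h'|$. In contrast to the amalgamated case there is no parity correction here, and this is what will match the acylindricity index to the malnormality index without the $+1$ shift that appeared before.

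For the forward direction, given a $k$-acylindrical action and $1\neq c\in gC_\varepsilon g^{-1}\cap C_{\pm\varepsilon}$, the membership $c\in C_{\pm\varepsilon}$ lets $c$ stabilise the edge $A\to t^{\pm\varepsilon}A$ (hence its endpoints), while $c\in gC_\varepsilon g^{-1}$ lets it stabilise the $g$-translate $gA\to gt^\varepsilon A$; therefore $\fix(c)$ contains both $A$ and $gA$, has diameter at least $d(A,gA)=|g|$, and $k$-acylindricity forces $|g|\leq k$, so the extension is $k$-step malnormal.

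For the converse, I would assume $k$-step malnormality and suppose some $c\neq 1$ fixes a subtree of diameter $k'$. Up to translating the picture by a suitable element, the two extremal fixed vertices can be taken to be $A$ and $gA$ with $|g|=k'=n$. Because $\fix(c)$ is a connected subtree containing them, $c$ belongs to every edge stabiliser along the geodesic, so in particular $1\neq c\in C_{\varepsilon_1}\cap v_{n-1}C_{\varepsilon_n}v_{n-1}^{-1}$. Applying the malnormality hypothesis with $g'=v_{n-1}$, $\varepsilon=\varepsilon_n$ and $\pm\varepsilon=\varepsilon_1$ produces a bound on $|v_{n-1}|$, and the preliminary distance computation translates this directly into the desired bound on $k'$. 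The principal obstacle, as in the amalgamated setting, is the careful bookkeeping of the signs $\varepsilon_1,\varepsilon_n$ and of the precise relationship between $|v_{n-1}|$ and $k'$ so as to obtain the sharp equality between the acylindricity constant and the malnormality constant rather than an off-by-one; exploiting also the intermediate edge stabilisers $v_{i-1}C_{\varepsilon_i}v_{i-1}^{-1}$, which $c$ must simultaneously lie in, should close the gap.
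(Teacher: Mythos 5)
Your setup --- the Bass--Serre tree with vertex set $G/A$ and edge set $G/C_{-1}$, the identification $d(hA,h'A)=|h^{-1}h'|$ via Britton's lemma, and the forward implication (acylindrical $\Rightarrow$ malnormal) --- is correct and matches the paper's argument. The genuine gap is in the converse. With your anchoring, a nontrivial $c$ fixing a geodesic of length $k'=n$ from $A$ to $gA$ lies in $C_{\varepsilon_1}\cap v_{n-1}C_{\varepsilon_n}v_{n-1}^{-1}$ with $|v_{n-1}|=n-1$, so the $k$-step malnormality hypothesis only yields $n-1\le k$, i.e.\ $(k+1)$-acylindricity --- exactly the off-by-one you flag. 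Your proposed remedy, exploiting the intermediate edge stabilizers $v_{i-1}C_{\varepsilon_i}v_{i-1}^{-1}$, cannot close it: any pair of edges along the geodesic other than the extreme ones gives a conjugator of length at most $n-1$, so these only produce weaker inequalities, never the missing unit.

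What actually closes the gap is re-anchoring the last edge stabilizer at its far endpoint. Since $v_{n-1}^{-1}cv_{n-1}\in C_{\varepsilon_n}$ and $t^{-\varepsilon_n}C_{\varepsilon_n}t^{\varepsilon_n}=C_{-\varepsilon_n}$, one has $c\in(v_{n-1}t^{\varepsilon_n})\,C_{-\varepsilon_n}\,(v_{n-1}t^{\varepsilon_n})^{-1}$, and $|v_{n-1}t^{\varepsilon_n}|=n$ because $v_{n-1}t^{\varepsilon_n}$ is a prefix of the reduced form of $g$. Applying the malnormality hypothesis to this expression together with $c\in C_{\varepsilon_1}$ gives $n\le k$, as required. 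This is precisely what the paper's proof does (implicitly, without comment): it writes the stabilizer of the first edge as $g_1aC_{\varepsilon}a^{-1}g_1^{-1}$ and that of the last edge as $g_2a'C_{\delta}a'^{-1}g_2^{-1}$, each anchored at one of the two \emph{extremal} vertices $g_1A$, $g_2A$ of the geodesic, so that the resulting conjugator $a^{-1}g_1^{-1}g_2a'$ has syllable length exactly $d(g_1A,g_2A)$ rather than $d(g_1A,g_2A)-1$. Once you insert this one-line re-anchoring, your argument goes through and coincides with the paper's.
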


\begin{proof}
	As in the previous case we consider the Bass-Serre tree  $\mathcal T_{(\mathscr G, \Gamma)}$ associated to the graph of groups previously described and to the choice of $\{\mathsf e_C\}$ as orientation (no choice of a maximal tree is needed in this case, since $\Gamma$ is a tree). The set of vertices of this tree is given by the left cosets $\{g\,A\}$; we choose representatives of the left cosets so that their reduced form ends with the stable letter to the power $\pm 1$. The set of edges similarly is given by the left cosets $\{g\, C_{-1}\}$. Let $g_1, g_2$ be two representative of left cosets of $A$ and let $|g_2|>|g_1|$ there exists an edge bewteen $g_1A$ and $g_2 A$ if and only if there exists an $a\in A$ such that either $g_2=g_1a t$ or $g_2=g_1at^{-1}$ and in that case the corresponding edge will be respectively either $g_1a C_1$ or $g_1a C_{-1}$. Now, assume that $A\,_\varphi^*$ is a $k$-step malnormal HNN-extension and take $g\in G$. Assume that $g$ fixes a path between $g_1\,A$ and $g_2\,A$. We assume that the length of this path is $|g_1^{-1}g_2|>k$ This means that $g\in\Stab_{G}(g_1 A g_1^{-1})\cap\Stab_{G}(g_2 Ag_2^{-1})$. By construction of the Bass-Serre tree this is possible if and only if $g\in g_1a C_{\varepsilon}a^{-1}g_1^{-1}\cap g_2a' C_{\delta} a'^{-1}g_2^{-1}$ for suitable $a, a'\in A$, where $\varepsilon$, $\delta\in\{\pm 1\}$ and should be chosen depending on the first and the last edge of the path. But this intersection is empty because the intersection $C_{\varepsilon}\cap (g_1^{-1}g_2)C_{\delta}(g_2^{-1}g_1)$ is empty by the $k$-step malnormality of the HNN-extension. Conversely, assume that the action of $G$ on $\mathcal T_{(\mathscr G,\Gamma)}$ is $k$-acylindrical, this means that if $g_1 A$, $g_2 A$ are at distance greater than $k$ then the respective stabilizers are disjoint. Since the following equality holds $\Stab_{G}(g_1A)\cap\Stab_{G}(g_2 A)= g_1a C_{\varepsilon}a^{-1}g_1^{-1}\cap g_2a' C_{\delta} a'^{-1}g_2^{-1}$ we see that the intersection on the right hand side is trivial, which implies that, whenever $|g|>k$ then $g C_{\varepsilon} g^{-1}\cap C_{\pm 1}=\{1\}$, which proves that the HNN-extension is $k$-step malnormal.
\end{proof}


\subsection{Malnormal splittings.} This subsection is devoted to prove Proposition \ref{splittings}. Let us establish first the following preparatory Lemma:

\begin{lem}\label{tech}
	Let $X\neq T^2\times I$ be an irreducible $3$-manifold and let $\{T^{\pm1}\}\subseteq \bdr X$ be two distinct boundary tori. Let  $f:T^{-1}\f T^{+1}$ be a gluing giving rise to a JSJ-torus for $X'$,  the resulting irreducible $3$-manifold, and by $\varphi=f_*:\pi_1(T^{-1})\f \pi_1(T^{+1})$ the induced isomorphism. Then for any pair of elements 
	\small
	$$(g_-, g_+)\in[\pi_1(X)\smallsetminus\pi_1(T^{-1})]\times [\pi_1(X)\smallsetminus\pi_1(T^{+1})]$$
	\normalsize
	we have
	\small
	$$(g_+)\,\varphi\left((g_-)\pi_1(T^{-1})(g_-)^{-1}\cap\pi_1(T^{-1})\right)\,(g_+)^{-1}\cap\pi_1(T^{\pm1})=\{1\}$$
	$$(g_-)\,\varphi^{-1}\left((g_+)\pi_1(T^{+1}) (g_+)^{-1}\cap\pi_1(T^{+1})\right)\,(g_-)^{-1}\cap\pi_1(T^{\pm1})=\{1\}$$
	\normalsize
\end{lem}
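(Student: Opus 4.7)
The plan is to first reduce the inner intersection $H_- := g_-\pi_1(T^{-1})g_-^{-1}\cap\pi_1(T^{-1})$ via Theorem \ref{AI_bdrtori}(i) together with the malnormality result (Theorem 3 of \cite{delaHarpe2014onmalnormal}) quoted in the introduction. Either $T^{-1}$ lies in a JSJ-component of hyperbolic type, in which case malnormality of $\pi_1(T^{-1})$ combined with $g_-\notin\pi_1(T^{-1})$ yields $H_-=\{1\}$ and both displayed identities hold trivially; or $T^{-1}$ lies in a Seifert fibered JSJ-component $X_{k_-}$ (necessarily with hyperbolic base orbifold, since a JSJ-component homeomorphic to $K\widetilde{\times}I$ has no boundary component of $X$ in its boundary). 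In the second situation Theorem \ref{AI_bdrtori}(i) gives either $H_-=\{1\}$, when $g_-\notin\pi_1(X_{k_-})$, or $H_-=\langle f_{k_-}\rangle$ with $f_{k_-}$ the regular fiber of $\pi_1(X_{k_-})$ and $g_-\in\pi_1(X_{k_-})$. Only the last subcase requires further work.

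Assuming $H_-=\langle f_{k_-}\rangle$, I would then dissect the outer intersection $g_+\langle\varphi(f_{k_-})\rangle g_+^{-1}\cap\pi_1(T^{\epsilon})$ for $\epsilon=\pm 1$. Since $\varphi(\langle f_{k_-}\rangle)\subset\pi_1(T^{+1})$, the intersection is contained in $g_+\pi_1(T^{+1})g_+^{-1}\cap\pi_1(T^{\epsilon})$, which I analyze by the same two results. For $\epsilon=+1$ triviality is immediate unless $T^{+1}$ sits in a Seifert fibered JSJ-component $X_{k_+}$ with $g_+\in\pi_1(X_{k_+})$, in which case the intersection is forced into $\langle f_{k_+}\rangle$. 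For $\epsilon=-1$, Theorem \ref{AI_bdrtori}(ii) (recalling $T^{-1}\neq T^{+1}$) gives triviality unless $X_{k_-}=X_{k_+}=:X_k$ and $g_+\in\pi_1(X_k)$, in which case the intersection lies in $\langle f_k\rangle$.

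The decisive step, which I expect to be the main obstacle, is excluding these remaining subcases by means of Lemma \ref{Gluingiso}. Any non-trivial element of the intersection would have the form $g_+\varphi(f_{k_-})^n g_+^{-1}=f_{k_+}^m$ with $n,m\neq 0$; conjugating by $g_+^{-1}$ and using the normality of $\langle f_{k_+}\rangle$ in $\pi_1(X_{k_+})$ (as the fiber subgroup of a Seifert fibration) yields $\varphi(f_{k_-})^n\in\langle f_{k_+}\rangle$ inside $\pi_1(T^{+1})$, i.e.\ $\varphi(\langle f_{k_-}\rangle)\cap\langle f_{k_+}\rangle\neq\{1\}$. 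But the hypothesis that $f$ produces a JSJ-torus for $X'$ forces the gluing $X_{k_-}\cup_\varphi X_{k_+}$ (or the self-gluing when $X_{k_-}=X_{k_+}$) to be non-Seifert-fibered, so Lemma \ref{Gluingiso} rules out exactly this intersection, producing a contradiction. The second displayed identity of the lemma follows from the first by the manifest symmetry exchanging $T^{-1}\leftrightarrow T^{+1}$, $g_-\leftrightarrow g_+$, and $\varphi\leftrightarrow\varphi^{-1}$.
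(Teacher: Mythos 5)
Your proposal is correct and follows essentially the same route as the paper: reduce the inner intersection to $\langle f_{k_-}\rangle$ (or $\{1\}$) via Theorem \ref{AI_bdrtori} and the malnormality/conjugate-separation statements, then kill the surviving Seifert subcases with Lemma \ref{Gluingiso}, which applies because the glued torus is a JSJ-torus and hence the gluing cannot match fibers. The only cosmetic differences are your organization of cases by the type of the component containing each torus (versus the paper's ``at least one hyperbolic / both Seifert'' split) and your appeal to symmetry for the second identity, which the paper likewise dispatches as ``analogous.''
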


\begin{proof} We shall denote by $X_{k_{\pm1}}$ the JSJ-components of $X$ which contain respectively $T^{\pm1}$ in their boundary (possibly $k_{-1}=k_{+1}$). We point out that $T^{-1}$ and $T^{+1}$ belong either to hyperbolic JSJ-components or to Seifert fibered JSJ-components with hyperbolic base orbifolds.
	We shall consider two different cases:
	\begin{itemize}
		\item[(1)]  at least one between $X_{k_{\pm1}}$ is a hyperbolic JSJ-component;
		\item[(2)] both $X_{k_{\pm1}}$ are Seifert fibered  JSJ-components.\\
	\end{itemize}
	
	\noindent\textit{Case} (1). Let us assume that  $T^{-1}$ is a boundary torus of a hyperbolic JSJ-component.  By \cite{delaHarpe2014onmalnormal} (see also Proposition \ref{int_leaves_stab}) we know that the abelian subgroups associated to the boundary tori of the hyperbolic JSJ-components are malnormal in the whole fundamental group, hence  for $g_-\in\pi_1(X)\smallsetminus\pi_1(T^{-1})$ we have $(g_-)\,\pi_1(T^{-1})\,(g_-)^{-1}\cap\pi_1(T^{-1})=\{1\}$, proving the first equality.\\ 
	To prove the second equality we remark that $\varphi^{-1}((g_+)\pi_1(T^{+1})(g_+)^{-1}\cap\pi_1(T^{+1}))$ is a (possibly trivial) subgroup of $\pi_1(T^{-1})$. The latter subgroup is malnormal in $\pi_1(X)$, hence a fortiori
		\small
	$$g_-\varphi^{-1}\left((g_+)\,\pi_1(T^{+1})\,(g_+)^{-1}\cap\pi_1(T^{+1})\right)g_-^{-1}\cap\pi_1(T^{-1})=\{1\}$$
	\normalsize 
	On the other hand, it follows from Proposition \ref{int_leaves_stab} that $\pi_1(T^{-1})$  is conjugately separated from $\pi_1(T^{+1})$. Hence: 
	\small
	$$g_-\varphi^{-1}\left((g_+)\,\pi_1(T^{+1})\,(g_+)^{-1}\cap\pi_1(T^{+1})\right)g_-^{-1}\cap\pi_1(T^{+1})=\{1\}$$
	\normalsize
	which gives the desired equality.\\
	
	\noindent\textit{Case} (2). Now suppose that $T^{-1}$ and $T^{+1}$ belong to two Seifert fibered JSJ-components (possibly the same JSJ-component).   Let $w\in\pi_1(T^{-1})$, by assumption  $g_-\in\pi_1(X)\smallsetminus\pi_1(T^{-1})$ hence by Theorem \ref{AI_bdrtori} (i) we see that $(g_-)\,w\,(g_-)^{-1}\in\pi_1(T^{-1})$ if and only if $g_-\in\pi_1(X_{k_{-1}})$ and $w\in\langle f_{k_{-1}}\rangle$. If this is the case then $(g_-)\,w\,(g_-)^{-1}=f_{k_{-1}}^{\ell}$ for some $\ell\in\Z$. Thanks to Lemma \ref{Gluingiso} we have that $\varphi(\langle g_-wg_-^{-1}\rangle)\cap\langle f_{k_{+1}}\rangle=\{1\}$. Applying  Theorem \ref{AI_bdrtori} (i) to the subgroup $\pi_1(T^{+1})$ we deduce that that $(g_+)\,\varphi( f_{k_{-1}}^{\ell})\,(g_+)^{-1}\not\in\pi_1\left( T^{+1}\right)$. Also, by Theorem \ref{AI_bdrtori} (ii), we have that $(g_+)\varphi(f_{k_{-1}}^\ell)(g_+)^{-1}\not\in\pi_1(T^{-1})$ and thus:
	\small
	$$(g_+)\varphi((g_-)\,\pi_1(T^{-1})\,(g_-)^{-1}\cap\pi_1(T^{-1}))(g_+)^{-1}\cap\pi_1(T^{\pm1})=\{1\}$$
	\normalsize
	The proof of the other equality is analogous to this one.
\end{proof}

\begin{proof}[Proof of Proposition \ref{splittings}]
	Since we excluded $X$ to be finitely covered by a torus bundle over the circle we observe  that the different cases listed in the statement of Proposition \ref{splittings} cover any possible scenario. We shall now do a case by case analysis.\\
	
	\textit{Proof of} (D1). By assumption $X_{k_{-1}}$ and $X_{k_{+1}}$ are  hyperbolic JSJ-components. Let $X'$ and $X''$ be the closures of the two connected components of $X\smallsetminus T$. By \cite{delaHarpe2014onmalnormal}, Theorem 3,  $\pi_1(T^{-1})$ is malnormal in $\pi_1(X')$ and $\pi_1(T^{+1})$ is malnormal in $\pi_1(X'')$. An amalgamated product $A\,_C^*B$ where $C$ is malnormal both in $A$ and $B$ it is readily seen to be $0$-step malnormal. \\
	
	\textit{Proof of} (D2). Assume that $X_{k_{-1}}$ is a hyperbolic JSJ-component. Let $X'$ and $X''$ be the closures of the two connected components of $X\smallsetminus T$. By \cite{delaHarpe2014onmalnormal} Theorem 3, $\pi_1(T^{-1})$ is malnormal in $\pi_1(X')$. It is straightforward to check that  an amalgamated product $A\,_C^*B$ where $C$ malnormal in $A$ (or $B$) is a $1$-step malnormal amalgamated product.\\
	
	\textit{Proof of} (D3).  By assumption $X_{k_{-1}}$ and $X_{k_{+1}}$ are both Seifert fibered manifolds with hyperbolic base orbifolds. Let $X'$ and $X''$ be the closures of the two connected components of $X\smallsetminus T$. 
	Let $g\in\pi_1(X)$ be such that $|g|\ge2$ and consider a reduced form $g=b_1a_1\cdots b_ma_m$ (with $b_1, a_m$ possibly equal to $1$). Let $c\in\pi_1(T)$ and take 
	$gcg^{-1}=(b_1\cdots a_m)\,c\,(b_1\cdots a_m)^{-1}$. Notice that, by definition of syllable length, if $|gcg^{-1}|>0$ then $gcg^{-1}\not\in\pi_1(T)$.
	Assume that $a_m\neq 1$ (the case $a_m=1$ is  analogous). By Theorem \ref{AI_bdrtori} (i) either $gcg^{-1}$ is reduced (and $|gcg^{-1}|\ge 3$) or $a_m\in\pi_1(X_{k_{-1}})$ and $c=f_{k_{-1}}^{\ell}$ for a suitable $\ell\in\Z$. In this second case by Lemma \ref{Gluingiso} we know that $\varphi(a_mca_m^{-1})=\varphi(f_{k_{-1}}^\ell)\not\in\langle f_{k_{+1}}\rangle$. Again by Theorem \ref{AI_bdrtori} (i) we have that $b_m\varphi(f_{k_{-1}})b_m^{-1}\not\in\pi_1(T^{+1})$ and hence we conclude that either $gcg^{-1}\in\pi_1(X'')\smallsetminus\pi_1(T^{+1})$  or $|g\,c\,g^{-1}|\ge 2$.  In both cases $gcg^{-1}\not\in\pi_1(T)$.\\
	
	\textit{Proof of} (D4). Without loss of generality we  assume that $X''\simeq K\widetilde{\times}I$. We recall  that $\pi_1(T^{+1})$ is normal in $\pi_1(X'')\cong \pi_1(K\widetilde{\times}I)$ (see Lemma \ref{kappatwistato}).
	\noindent Let $g\in\pi_1(X)$ be such that $|g|\ge 4$. We shall show that $g\,c\,g^{-1}\not\in\pi_1(T)$ for any $c\in\pi_1(T)$; as in the previous case observe that if $|gcg^{-1}|>0$ then $gcg^{-1}\not\in\pi_1(T)$. Let  $g=b_1a_1\cdots b_ma_m$ be a reduced form for $g$, with $b_1, a_m$ possibly equal to $1$.  We treat separately  the case where $a_m\neq 1$ and the case where $a_m=1$.\\
	Assume $a_m\neq 1$. By Theorem \ref{AI_bdrtori} (i) we know that either
	\small $$g\,c\,g^{-1}=(b_1\cdots b_m)(a_m\,c\, a_m^{-1})(b_1\cdots b_m)^{-1}$$
	\normalsize
	 is a reduced form and $|gcg^{-1}|\ge 7$ or  $a_m\,c\,a_m^{-1}\in\pi_1(T^{-1})$, $c=f_{k_{-1}}^{\ell}$ for a suitable $\ell\in\Z$ and $a_m\in\pi_1(X_{k_{-1}})$. Thanks to  Lemma \ref{Gluingiso} we know that the element $\varphi(a_m c a_m^{-1})=\varphi(f_{k_{-1}}^{\pm\ell})$ does not belong to  the two infinite cyclic subgroups generated by the regular fibers of $X''\simeq K\widetilde{\times}I$ \footnote{Here and after the sign $\pm$ depends on the orientability of the base orbifold of the JSJ-component $X_{k_{-1}}$, and on the element $a_m\in\pi_1(X_{k_{-1}})$. See the presentation of the fundamental group of a Seifert fibered manifold in subsection \S2.1.1.}. By Lemma \ref{kappatwistato} we have  $b_m\varphi(f_{k_{-1}}^{\pm\ell})b_m^{-1}\not\in\langle\varphi(f_{k_{-1}})\rangle$. Hence the element $\varphi^{-1}(b_m\varphi(f_{k_{-1}}^{\pm\ell})b_m^{-1})$ does not belong to $\langle f_{k_{-1}}\rangle$, and using Theorem \ref{AI_bdrtori} (i) we get
	 \small $$a_{m-1}\varphi^{-1}(b_m\varphi(f_{k_{-1}}^{\pm\ell})b_m^{-1})a_{m-1}^{-1}\in\pi_1(X_{k_{-1}})\smallsetminus\pi_1(T^{-1})$$\normalsize We conclude that $|g\,c\,g^{-1}|\ge 3$ and thus $g\,c\,g^{-1}\not\in\pi_1(T)$.\\
	Assume now $a_m=1$. Consider $g\,c\,g^{-1}=(b_1\cdots b_m)c(b_1\cdots b_m)^{-1}$. The element $b_m c b_m^{-1}$ is still in $\pi_1(T^{+1})$, since $\pi_1(T^{+1})$ is normal in $\pi_1(X'')$. By Theorem \ref{AI_bdrtori} (i) we have $a_{m-1}\varphi^{-1}(b_m\,c\,b_m^{-1})a_{m-1}^{-1}\not\in\pi_1(T^{-1})$, unless $\varphi^{-1}(b_m cb_m^{-1})=f_{k_{-1}}^{\ell}$ for some $\ell\in\Z$ and  $a_{m-1}\in\pi_1(X_{k_{-1}})$. In the latter case 
	\small $$\varphi(a_{m-1}\varphi^{-1}(b_m\,c\,b_m^{-1})a_{m-1}^{-1})=\varphi(f_{k_{-1}}^{\pm\ell})$$
	 \normalsize
	 and by Lemma \ref{Gluingiso} we deduce that $\varphi(f_{k_{-1}}^{\pm\ell})$ does not belong to one of the two infinite cyclic subgroups generated by the regular fibers of $X''\simeq K\widetilde{\times}I$. Using Lemma \ref{kappatwistato} we deduce that $b_{m-1}\varphi(f_{k_{-1}}^{\pm\ell})b_{m-1}^{-1}\not\in\langle\varphi(f_{k_{-1}})\rangle$. Applying $\varphi^{-1}$  we obtain that $\varphi^{-1}(b_{m-1}\varphi(f_{k_{-1}}^{\pm\ell})b_{m-1}^{-1})\in\pi_1(T^{-1})\smallsetminus\langle f_{k_{-1}}\rangle$. By Theorem \ref{AI_bdrtori} (i) we conclude that
	 $a_{m-2}\varphi^{-1}(b_{m-1}\varphi(f_{k_{-1}}^{\pm\ell})b_{m-1}^{-1})a_{m-2}^{-1}\in\pi_1(X')\smallsetminus\pi_1(T^{-1})$.
	Hence, either $|gcg^{-1}|\ge 3$, or $|gcg^{-1}|=1$ and $gcg^{-1}\in\pi_1(X')\smallsetminus\pi_1(T^{-1})$.\\
	
	\textit{Proof of} (ND1). Let $X'$ be the closure of $X\smallsetminus T$.  Since $T^{-1}$ and $T^{+1}$ bound two hyperbolic JSJ-components (possibly the same JSJ-component) we know by Theorem 3 in \cite{delaHarpe2014onmalnormal} (or by Proposition \ref{int_leaves_stab}) that the subgroups $\pi_1(T^{-1})$ and $\pi_1(T^{+1})$ are malnormal in $\pi_1(X')$. Moreover, by Proposition \ref{int_leaves_stab} we know that they are  conjugately separated in $\pi_1(X')$. Consider $w\in\pi_1(T^{-\varepsilon_s})$ and any $|g|\ge 2$ we have that $|g\,w\,g^{-1}|\ge 2$. Let $g=w_0 t^{\varepsilon_1}\cdots t^{\varepsilon_s}w_s$ be a reduced form. We shall distinguish between the case where $\varepsilon_{s-1}=\varepsilon_s$ and the case where $\varepsilon_{s-1}=-\varepsilon_s$\\
	If $\varepsilon_{s-1}=\varepsilon_s$ it follows from the fact that $\pi_1(T^{\pm 1})$ are conjugately separated that $|gwg^{-1}|\ge 2$.
	Assume  that $\varepsilon_{s-1}=-\varepsilon_s$. Since we have chosen a reduced form for $g$, we  have $w_{s-1}\in\pi_1(X_{k_{\varepsilon_{s}}})\smallsetminus \pi_1(T^{\varepsilon_{s}})$. By \cite{delaHarpe2014onmalnormal} Theorem 3  (see also Proposition \ref{int_leaves_stab}) we conclude that  $w_{s-1}t^{\varepsilon_s}\,w\,t^{-\varepsilon_s}w_{s-1}^{-1}\not\in\pi_1(T^{\pm1})$  which shows that $$gwg^{-1}=(w_0t^{\varepsilon_1}\cdots t^{\varepsilon_{s-1}})(w_{s-1} t^{\varepsilon_s}w_s\,w\,w_s^{-1}t^{-\varepsilon_s}w_{s-1}^{-1})(w_0t^{\varepsilon_1}\cdots t^{\varepsilon_{s-1}})^{-1}$$  has length $|gwg^{-1}|\ge 2$, hence $gwg^{-1}\not\in\pi_1(T)$.\\
	
	\textit{Proof of} (ND2). As in the previous case let $X'$ be the closure of $X\smallsetminus T$.  Let $w\in\pi_1(T)$ and let $g\in\pi_1(X)$ be such that $|g|\ge 3$. Consider a reduced form $g=w_0t^{\varepsilon_1}w_1\cdots t^{\varepsilon_s}w_s$. We conjugate $w$ by $g$ and we remark that the form 
$
	(w_0t^{\varepsilon_1}\cdots t^{\varepsilon_s})(w_s\,w\,w_s^{-1})	(w_0t^{\varepsilon_1}\cdots t^{\varepsilon_s})^{-1}
	$
	\normalsize
	is reduced unless $w_sww_s^{-1}\in\pi_1(T^{-\varepsilon_s})$. If the previous form is reduced then $gwg^{-1}\in\pi_1(X)\smallsetminus\pi_1(T)$. Otherwise, we need to distinguish  several cases, depending whether $\varepsilon_{s-2}$ and $\varepsilon_{s-1}$ have the same or the opposite sign with respect to $\varepsilon_{s}$.
	
	If $\varepsilon_{s-1}=-\varepsilon_s$ and $\varepsilon_{s-2}=\varepsilon_{s}$ we observe that, since $|g|\ge 3$ and the previous form is reduced, we have that $w_{s-1}\not\in\pi_1(T^{\varepsilon_s})$ and $w_{s-2}\not\in\pi_1(T^{-\varepsilon_s})$. Hence we can use Lemma \ref{tech} and conclude that $gwg^{-1}\in\pi_1(X)\smallsetminus\pi_1(T)$.
	
	Let $\varepsilon_{s-1}=\varepsilon_s$ and $\varepsilon_{s-2}=\varepsilon_s$.  Assume that $w_sww_s^{-1}\in\pi_1(T^{-\varepsilon_s})$ (otherwise $|gwg^{-1}|\ge 6$) and consider the element $\varphi^{\varepsilon_s}(w_sww_s^{-1})\in\pi_1(T^{\varepsilon_s})$. Thanks to Theorem \ref{AI_bdrtori} (ii) the element $w_{s-1}\varphi^{\varepsilon_s}(w_sww_s^{-1})w_{s-1}^{-1}$ belongs to $\pi_1(T^{-\varepsilon_s})$ if and only if $k=k_{-1}=k_{+1}$, the JSJ-component $X_k$ is Seifert fibered, the element $w_{s-1}\in\pi_1(X_k)$ and $\varphi^{\varepsilon_s}(w_sww_s^{-1})=f_{k}^\ell$ for a suitable $\ell\in\Z$
 (otherwise $|gwg^{-1}|\ge 4$). In this case
	 it follows from Lemma \ref{Gluingiso} that
	\small
	$$t^{\varepsilon_{s-1}} w_{s-1}\varphi^{\varepsilon_s}(w_sww_s^{-1})w_{s-1}^{-1} t^{-\varepsilon_{s-1}}=t^{\varepsilon_{s}} w_{s-1}\varphi^{\varepsilon_s}(w_sww_s^{-1})w_{s-1}^{-1} t^{-\varepsilon_{s}}=$$$$=t^{\varepsilon_{s}} f_k^{\pm\ell}t^{-\varepsilon_{s}}=\varphi^{\varepsilon_s}(f_k^{\pm \ell})\in\pi_1(T^{\varepsilon_{s}})\smallsetminus\langle f_k\rangle$$ 
	\normalsize
	where in the first equality we used the assumption $\varepsilon_{s-1}=\varepsilon_s$.
	  By Theorem \ref{AI_bdrtori} we see that
	  \small
	  $$ |(t^{\varepsilon_{s-2}}w_{s-2}t^{\varepsilon_{s-1}}w_{s-1} t^{\varepsilon_s}w_s)w (t^{\varepsilon_{s-2}}w_{s-2}t^{\varepsilon_{s-1}}w_{s-1} t^{\varepsilon_s}w_s)^{-1}|\ge 2$$ 
	  \normalsize
	  and we conclude that $|gwg^{-1}|\ge 2$ and thus $gwg^{-1}\not\in\pi_1(T)$.

	  If $\varepsilon_{s-1}=-\varepsilon_s$ and $\varepsilon_{s-2}=-\varepsilon_s$, we observe that $w_{s-1}\not\in\pi_1(T^{\varepsilon_s})$, since we have chosen a reduced form for $g$. The form for $gwg^{-1}$ is reduced unless  $w_sww_s^{-1}\in\pi_1(T^{-\varepsilon_s})$.  In this case, since $w_{s-1}\not\in\pi_1(T^{\varepsilon_s})$ we know by Theorem \ref{AI_bdrtori} (i) that $w_{s-1}t^{\varepsilon_s}w_s\,w\,w_s^{-1}t^{-\varepsilon_s}w_{s-1}^{-1}=w_{s-1}\varphi^{\varepsilon_s}(w_sww_s^{-1})w_{s-1}^{-1}\in\pi_1(T^{\varepsilon_s})$ if and only if $X_{k_{\varepsilon_s}}$ is a Seifert fibered JSJ-component, $w_{s-1}\in\pi_1(X_{k_{\varepsilon_s}})$ and $\varphi^{\varepsilon_s}(w_sww_s^{-1})=f_{k_{\varepsilon_s}}^{\ell}$ for a suitable $\ell\in\Z$. If one of the previous conditions fails $|gwg^{-1}|\ge 4$, and thus $gwg^{-1}\not\in\pi_1(T)$. Otherwise we have that \small$$t^{\varepsilon_{s-1}}(w_{s-1} f_{k_{\varepsilon_s}}^{\ell}w_{s-1}^{-1})t^{-\varepsilon_{s-1}}=t^{-\varepsilon_{s}}(w_{s-1} f_{k_{\varepsilon_s}}^{\ell}w_{s-1}^{-1})t^{\varepsilon_{s}}=\varphi^{-\varepsilon_s}(f_{k_{\varepsilon_s}}^{\pm\ell})\in\pi_1(T^{-\varepsilon_s})$$\normalsize  If $X_{k_{-\varepsilon_s}}$ is not Seifert fibered then it follows from Proposition \ref{int_leaves_stab} that $\pi_1(T^{\pm 1})$ are conjugately separated. Since $\varepsilon_{s-2}=\varepsilon_{s-1}=-\varepsilon_s$ we conclude that $|gwg^{-1}|\ge 2$. Assume that $X_{k_{-\varepsilon_s}}$ is Seifert fibered.  By Lemma \ref{Gluingiso} we have that \small$$t^{\varepsilon_{s-1}}(w_{s-1} f_{k_{\varepsilon_s}}^{\ell}w_{s-1}^{-1})t^{-\varepsilon_{s-1}}\in\pi_1(T^{-\varepsilon_s})\smallsetminus\langle f_{k_{-\varepsilon_s}}\rangle$$\normalsize and by Theorem \ref{AI_bdrtori} (ii) we conclude that $t^{\varepsilon_{s-1}}(w_{s-1} f_{k_{\varepsilon_s}}^{\ell}w_1)t^{-\varepsilon_{s-1}}\not\in\pi_1(T^{\varepsilon_s})$. It follows that $|gwg^{-1}|\ge 2$ and thus $gwg^{-1}\not\in\pi_1(T)$.
	 
	  Finally let $\varepsilon_{s-1}=\varepsilon_s$ and $\varepsilon_{s-2}=-\varepsilon_s$. Since the form $g=w_0t^{\varepsilon_1}w_1\cdots t^{\varepsilon_s}w_s$ is reduced, we deduce that  $w_{s-2}\in\pi_1(X')\smallsetminus\pi(T^{\varepsilon_s})$. Now take $w\in\pi_1(T)$; consider
	 \small
	 $$gwg^{-1}=(w_0t^{\varepsilon_1}\cdots t^{\varepsilon_s}w_s)\,w\, (w_0t^{\varepsilon_1}\cdots t^{\varepsilon_s}w_s)^{-1}$$
	 \normalsize
	The previous form is reduced unless $w_sww_s^{-1}\in\pi_1(T^{-\varepsilon_s})$. If this is the case, observe that $w_{s-1}t^{\varepsilon_{s}}(w_s\,w\,w_s) t^{-\varepsilon_s}w_{s-1}^{-1}=w_{s-1}\varphi^{\varepsilon_s}(w_sww_s^{-1})w_{s-1}^{-1}\in\pi_1(T^{-\varepsilon_s})$ if and only if $T^{\pm 1}$ are both boundary tori of the same Seifert fibered JSJ-component $X_{k}$,  $w_{s-1}\in\pi_1(X_k)$ and $\varphi^{\varepsilon_s}(w_sww_s^{-1})=f_k^\ell$ for a suitable $\ell\in\Z$. Now observe that 
	\small
	$$t^{\varepsilon_{s-1}}w_{s-1}\varphi^{\varepsilon_s}(w_sww_s^{-1})w_{s-1}t^{-\varepsilon_{s-1}}=t^{\varepsilon_{s}} f_k^{\pm\ell}t^{-\varepsilon_{s}}=\varphi^{\varepsilon_{s}}(f_k^{\pm\ell})\in\pi_1(T^{\varepsilon_s})\smallsetminus\langle f_k\rangle$$
	\normalsize
	where the last assertion follows from Lemma \ref{Gluingiso}.
	Since $w_{s-2}\in\pi_1(X')\smallsetminus\pi_1(T^{\varepsilon_s})$,  we deduce from Theorem \ref{AI_bdrtori} that $w_{s-2}\varphi^{\varepsilon_s}(f_k^{\pm\ell})w_{s-2}^{-1}\not\in\pi_1(T^{\pm 1})$. Thus $|gwg^{-1}|\ge 2$, which implies that $gwg^{-1}\in\pi_1(X)\smallsetminus\pi_1(T)$.
 	\end{proof}

\bibliographystyle{amsalpha}
\bibliography{biber_per_structure_short_final2}

\end{document}